\documentclass[11pt, oneside,reqno]{amsart}
\usepackage{geometry}                
\geometry{letterpaper}                   

\usepackage{graphicx}
\usepackage{subfig}
\usepackage{amssymb}
\usepackage{epstopdf}
\usepackage{mathrsfs,amssymb}
\DeclareGraphicsRule{.tif}{png}{.png}{`convert #1 `dirname #1`/`basename #1 .tif`.png}
\usepackage{graphicx}
\usepackage{amscd}
\usepackage{amsmath}
\usepackage{amssymb}
\usepackage{amsthm}
\usepackage{amsfonts}
\usepackage{fancyhdr}
\usepackage{amsxtra}
\usepackage{tikz}
\usepackage{setspace}
\usepackage{enumitem}
\usepackage{amsrefs}
\onehalfspacing
\pdfpagewidth 8.5in \pdfpageheight 11in \setlength\topmargin{0in}
\setlength\headheight{0in} \setlength\textheight{8.8in}
\setlength\textwidth{6.5in} \setlength\oddsidemargin{0in}
\setlength\evensidemargin{0in}
\newtheorem{theorem}{Theorem}
\newtheorem{lemma}{Lemma}

\newtheorem{proposition}{Proposition}

\newtheorem{remark}{Remark}

\newtheorem{definition}{Definition}

\def\bt{\begin{theorem}}
\def\et{\end{theorem}}
\def\bp{\begin{proposition}}
\def\ep{\end{proposition}}
\def\bl{\begin{lemma}}
\def\el{\end{lemma}}

\numberwithin{equation}{section}
\numberwithin{theorem}{section}
\numberwithin{proposition}{section}
\numberwithin{lemma}{section}
\numberwithin{corollary}{section}
\numberwithin{definition}{section}
\numberwithin{claim}{section}

\begin{document}
\setcounter{tocdepth}{1}

\title[emergence of NS-$\alpha$ model for channel flows]{On the emergence of the Navier-Stokes-$\alpha$ model for turbulent channel flows} 

\author{C. Foias$^{1}$}
\author{J. Tian$^{1}$}
\address{$^1$Department of Mathematics\\
Texas A\&M University\\ College Statin, TX, 77843}
\author{B. Zhang$^{1}$}
\address{$\dagger$ corresponding author}
\email[C. Foias]{foias@math.tamu.edu}
\email[J. Tian]{jtian@math.tamu.edu}
\email[B. Zhang$^\dagger$]{bszhang@math.tamu.edu}
\subjclass[2010]{35Q30,35B41,76D05}

\begin{abstract}
In a series of papers (see \cite{CDT02} and the pertinent references therein) the 3D Navier-Stokes-$\alpha$ model were shown to be a useful complement to the 3D Navier-Stokes equations; and in particular, to be a good Reynolds version of the latter equations. In this work, we introduce a simple Reynolds averaging which, due to the wall roughness, transforms the Navier-Stokes equations into the Navier-Stokes-$\alpha$ model.
\end{abstract}
\keywords{Navier-Stokes equations, viscous Camassa-Holm equations, NS-$\alpha$ model, averaging, channel flow, wall roughness, turbulence}

\maketitle
\tableofcontents
\section{Introduction}
It is an accepted fact that the Navier-Stokes equations (NSE) are a good mathematical model for the dynamics of non-turbulent viscous incompressible fluid flows. In this paper, we accept, as done in \cite{A02} and \cite{A04}, that the Navier-Stokes-$\alpha$ model (NS-$\alpha$) are a good mathematical model for the dynamics of appropriately averaged turbulent fluid flows. We will first give our rationale for this acceptance. The possibility that the NS-$\alpha$ are an averaged version of the NSE, first considered in \cite{CFHO98} and \cite{CFHO99}, was entailed by several auspicious facts. Namely, the NS-$\alpha$ analogue of the Poiseuille, resp, Hagen, solution in a channel, resp, a pipe, displays both the classical von K$\acute{a}$rm$\acute{a}$n  and the recent Barenblatt-Chorin laws (\cite{H99}); moreover, the NS-$\alpha$ analogue of the Hagen solution, when suitably calibrated, yields good approximations to many experimental data \cite{CFHO99}. Furthermore, D. D. Holm found an original, physically sound, statistical averaging of the Euler equations such that the NS-$\alpha$ results from the addition of a linear viscous effect (\cite{H99}); see also \cite{CFHOTW99} for a succinct presentation of Holm's averaging. This new averaging is as general as the Reynolds averaging, but unlike the latter, it yields ``closed systems of differential equations". We close the above, rather long, argument by quoting the remarkably successful extension of the classic Blasius theory for a turbulent boundary layer (\cite{A04}) to a larger range of Reynolds numbers, done by using the NS-$\alpha$ instead of the NSE.

Our aim is to obtain a simple Reynolds type averaging which will transform the NSE into the NS-$\alpha$. For this purpose we will concentrate our consideration to a restricted class of fluid flows. This class, denoted by $\mathcal{P}$, will be defined (recurrently) by five assumptions, for which we will provide the respective rationale. The definition of the class $\mathcal{P}$ is inspired by the concept of regular part of the weak attractor of the 3D NSE (\cite{CR87}, \cite{CRR10}) as well as by that of the sigma weak attractor defined in \cite{BFL}.
 The new ingredient in our considerations is the hypothesis that the turbulence described by the NS-$\alpha$ is partly due to the roughness of the walls. Therefore we introduce a specifically designed mathematical model for the effect of the wall roughness onto the fluid flows, by adopting one of the Mandelbrot's paradigms \cite{M83}, which in our case is that the roughness is actually the sum of a self-similar decreasing sequence of rugosities. Our model is based on the hypothesis that the effect of each of the smaller rugosities is concentrated mainly on the flow eddies of linear size comparable with its size. We note that the simple averaging procedure we obtain does not have the generality of Holm's averaging; it can be used only if the fluid is at least partly limited by walls and the region containing the fluid has an appropriate geometry.

\section{Preliminaries}
\subsection{Mathematical backgrounds}
Throughout, we consider an incompressible viscous fluid in an immobile region $\mathcal{O}\subset \mathbb{R}^3$ subjected to a potential body force $F=-\nabla \Phi$, with a time independent potential $\Phi=\Phi(x)\in C^{\infty}(\mathcal{O})$. The velocity field of such flows,
\begin{align}
\label{nse_form}
u=u(x,t)=(u_1(x,t),u_2(x,t),u_3(x,t)), x=(x_1,x_2,x_3)\in \mathcal{O}
\end{align}
satisfies the NSE,
\begin{align}
\label{nse}
\frac{\partial}{\partial t}u+(u\cdot \nabla)u=\nu \Delta u-\nabla P, \hspace{.2 in} \nabla\cdot u=0,
\end{align}
where $P=P(x,t):=p(x,t)+\Phi(x)$, $t$ denotes the time, $\nu>0$ the kinematic viscosity, and $p=p(x,t)$ the pressure.

The NS-$\alpha$ are 
\begin{align}
\label{che}
\frac{\partial}{\partial t}v+\left(u\cdot\nabla\right)v+\sum_{j=1}^{3}v_j\nabla u_j=\nu \Delta v-\nabla Q, \hspace{.2 in} \nabla \cdot u=0,
\end{align}
where
\begin{align}
\label{vche_nse}
v=(v_1,v_2,v_3)&=(1-\alpha^2\Delta)u \nonumber \\
&=\left((1-\alpha^2\Delta)u_1,(1-\alpha^2\Delta)u_2,  (1-\alpha^2\Delta)u_3\right), 
\end{align}
and $Q$ in (\ref{che}) (like $P$ in (\ref{nse})) may depend on the time $t$.

The following boundary conditions, for both the NSE (\ref{nse}) and the NS-$\alpha$ (\ref{che}),
\begin{align}
\label{no_slip_bc}
u(x,t)=0, \hspace{.1 in}  \text{ for } x\in \partial \mathcal{O}:=\text{boundary of } \mathcal{O},
\end{align}
must hold, since the fluid we consider is viscous.

One can see that if $\alpha=0$, the NS-$\alpha$ (\ref{che}) become the NSE (\ref{nse}), so that (\ref{che}) is also referred as an $\alpha$-model of (\ref{nse}).

In the case of a channel flow, that is, $\mathcal{O}=\mathbb{R}\times\mathbb{R}\times[x_3^{(l)},x_3^{(u)}]$, where $h:=x_3^{(u)}-x_3^{(l)}>0$ is the ``height" of the channel, we recall that a vector of the form
\begin{align}
\label{vel_par_form}
(U(x_3),0,0)
\end{align}
is a $stationary$ (i.e., time independent) solution of the NSE (\ref{nse}) if and only if 
\begin{align}
\label{nse_par_form}
U(x_3)=b\left(1-\frac{(x_3-\frac{x_3^{(u)}+x_3^{(l)}}{2})^2}{(h/2)^2}\right), \hspace{.1 in} x_3\in[x_3^{(l)},x_3^{(u)}],
\end{align}
where $b$ is a constant velocity;
respectively, $(U(x_3),0,0)$ is a stationary solution of the NS-$\alpha$ (\ref{che}) if and only if,
\begin{align}
\label{vche_par_form}
U(x_3)=a_1\left(1-\frac{\cosh\left((x_3-\frac{x_3^{(u)}+x_3^{(l)}}{2})/{\alpha}\right)}{\cosh h/(2\alpha)}\right)+a_2\left(1-\frac{(x_3-\frac{x_3^{(u)}+x_3^{(l)}}{2})^2}{(h/2)^2}\right), 
\end{align}
for $x_3\in[x_3^{(l)},x_3^{(u)}]$,
where $a_1, a_2$ are constant velocities (cf. formula (9.6) in \cite{CFHO99}). Above, $\cosh(x)=\frac{e^{x}+e^{-x}}{2}$ is the hyperbolic cosine function.

To simplify our notation, we will assume up to Section \ref{section_6} that $x_3^{(l)}=0$ and $x_3^{(u)}=h$.

\subsection{Poincar$\acute{e}$ Inequality}
The following classical Poincar$\acute{e}$ inequality will be used in our discussion.
\begin{lemma}
\label{poincare}
For any $C^1$ function $\phi(y)$ defined on $[0,h]$, with $\phi(0)=\phi(h)=0$, we have
\begin{align}\label{poincare ineq}
\int_{0}^{h} (\phi^{\prime}(y))^2dy\geq \frac{1}{h^2} \int_0^h(\phi(y))^2dy.
\end{align}
\end{lemma}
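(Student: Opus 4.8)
The plan is to derive the inequality directly from the fundamental theorem of calculus together with the Cauchy--Schwarz inequality; this elementary route in fact produces a constant strictly larger than $1/h^2$, so the claimed bound follows with room to spare. Concretely, since $\phi\in C^1([0,h])$ with $\phi(0)=0$, I would first write, for every $y\in[0,h]$,
\begin{align}
\phi(y)=\int_0^y \phi'(s)\,ds .
\end{align}
Then Cauchy--Schwarz applied to this integral gives $|\phi(y)|^2\le y\int_0^y(\phi'(s))^2\,ds\le y\int_0^h(\phi'(s))^2\,ds$.

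Integrating this pointwise estimate over $y\in[0,h]$ and using $\int_0^h y\,dy=h^2/2$ yields
\begin{align}
\int_0^h (\phi(y))^2\,dy\le \frac{h^2}{2}\int_0^h (\phi'(y))^2\,dy,
\end{align}
which is the asserted inequality (even with $2/h^2$ in place of $1/h^2$). If a sharper constant were wanted, one could instead split $[0,h]$ at its midpoint and use $\phi(0)=0$ on $[0,h/2]$ together with $\phi(h)=0$ on $[h/2,h]$, or expand $\phi$ in a Fourier sine series on $[0,h]$, the latter giving the optimal constant $\pi^2/h^2$; neither refinement is needed here.

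I do not expect any genuine obstacle. The only steps that merit a word of justification are the representation $\phi(y)=\int_0^y\phi'(s)\,ds$, which is immediate from $\phi\in C^1$ and $\phi(0)=0$, and passing from the pointwise bound to the integrated one (a trivial application of monotonicity of the integral for continuous integrands on a bounded interval). Since the statement asks only for the weak constant $1/h^2$, I would present the two-line Cauchy--Schwarz argument rather than the Fourier computation.
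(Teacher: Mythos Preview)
Your argument is correct: the Cauchy--Schwarz step gives $|\phi(y)|^2\le y\int_0^h(\phi')^2$, and integrating in $y$ yields the inequality with constant $2/h^2$, which is stronger than the stated $1/h^2$. There is nothing to patch.

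As for comparison with the paper: the paper does not supply a proof of this lemma at all. It is stated as the ``classical Poincar\'e inequality'' and used later (in Appendix~A, the uniqueness argument for $\langle u\rangle$), but no derivation is given. Your two-line Cauchy--Schwarz proof is entirely appropriate here; the Fourier sine-series route you mention would also mesh naturally with the paper's later use of the basis $\{\sin(\pi k x/h)\}_{k\ge1}$, but the elementary argument is more than sufficient for the weak constant $1/h^2$ actually invoked.
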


\section {The class $\mathcal{P}$}
\subsection{Definition of class $\mathcal{P}$}
By definition, a function $u(x,t)$ belongs to class $ \mathcal{P}$ if it satisfies $(\bf{A}.1)-(\bf{A}.5)$,

$(\bf{A}.1)$ $u(x,t)\in C^{\infty}(\mathcal{O}\times \mathbb{R})$.

$(\bf{A}.2)$ $u(x,t)$ is periodic in $x_1$ and $x_2$, with periods $\Pi_1$ and $\Pi_2$, respectively; i.e., 
\begin{align}
\label{periodicity_u}
u(x_1+\Pi_1,x_2,x_3,t)=u(x_1,x_2,x_3,t), \hspace{.1 in} u(x_1,x_2+\Pi_2,x_3,t)=u(x_1,x_2,x_3,t).
\end{align}

\begin{remark}
\label{rk_pressure_dp}
From $(\ref{nse})$ and $(\bf{A}.2)$, we obtain, for $1\leq j \leq 3$,
\begin{align*}
\frac{\partial}{\partial x_j}\left(P(x_1+\Pi_1,x_2,x_3,t)-P(x_1,x_2,x_3,t)\right)=0
\end{align*}
and
\begin{align*}
\frac{\partial}{\partial x_j}\left(P(x_1,x_2+\Pi_2,x_3,t)-P(x_1,x_2,x_3,t)\right)=0,
\end{align*}
so $P(x_1+\Pi_1,x_2,x_3,t)-P(x_1,x_2,x_3,t)$ and $P(x_1,x_2+\Pi_2,x_3,t)-P(x_1,x_2,x_3,t)$ are functions only depending on time $t$. For simplicity, we denote
\begin{align}
\label{pressure_difference}
\left\{\begin{matrix}
p_1(t):=P(x_1+\Pi_1,x_2,x_3,t)-P(x_1,x_2,x_3,t)\\
p_2(t):=P(x_1,x_2+\Pi_2,x_3,t)-P(x_1,x_2,x_3,t)
\end{matrix}\right.
\end{align}
\end{remark}

\begin{remark}
\label{meaning of p1 and p2}
We also assume that the time independent potential function $\Phi(x)$ is periodic in $x_1$ and $x_2$ with periods $\Pi_1$ and $\Pi_2$, respectively, then, physically, $p_1(t)$ and $p_2(t)$ represent the pressure drops of the flows in $x_1$ and $x_2$ direction, respectively.
\end{remark}

$(\bf{A}.3)$ $u(x,t)$ exists for all $t\in \mathbb{R}$, and has bounded energy per mass, i.e.,
\begin{align}
\label{bdd}
 \int_{0}^{\Pi_1} \int_{0}^{\Pi_2}\int_{0}^{h}u(x,t)\cdot u(x,t) dx<\infty, \forall t\in \mathbb{R}. 
\end{align}

$(\bf{A}.4)$ there exists a constant $ 0<\bar{p}<\infty$ for which,
\begin{align*}
&0<-p_1(t)\leq \bar{p}\\
&|p_2(t)|\leq \bar{p}  
\end{align*}
for all $t \in \mathbb{R}$, where $p_1(t)$ and $p_2(t)$ are defined in (\ref{pressure_difference}).

 $(\bf{A}.5)$ $P=P(x,t)$ is bounded in $x_2$ direction, i.e.,
\begin{align*}
\sup_{x_2 \in \mathbb{R}}P(x_1,x_2,x_3,t)<\infty, \forall x_1,x_3,t\in \mathbb{R}.
\end{align*}

\begin{remark}
\label{equiv_cond}
In fact, $(\bf{A}.5)$ can be replaced with the following weaker assumption $(\bf{A}.5')$. We will provide the proof for this claim elsewhere.
\end{remark} 

$(\bf{A}.5')$ 
\begin{align}
\label{wk_A5}
\limsup_{x_2\rightarrow \pm \infty}P(x_1,x_2,x_3,t)<\infty,
\end{align}
for any given $x_1,x_3$ and $t\in \mathbb{R}$.

\begin{remark}
\label{rk_p2}
 $(\bf{A}.4)$ and $(\bf{A}.5)$ imply 
\begin{align}
\label{periodicity_p}
p_2(t)\equiv 0,
\end{align}
i.e., $P$ is periodic in $x_2$ direction. 
Indeed, for all $m\in \mathbf{Z}$,
\begin{align*}
P(x_1,x_2+m\Pi_2,x_3,t)=P(x_1,x_2,x_3,t)+mp_2(t),
\end{align*} 
one then concludes that $p_2(t)$ must equal zero by using $(\bf{A}.5)$ and letting $m\rightarrow \infty$.
\end{remark}

\subsection{Motivations and rationale for assumptions $(\bf{A}.1)$-$(\bf{A}.5)$}

In our following discussion, we will consider solutions of NSE (\ref{nse}) and/or of NS-$\alpha$ (\ref{che}) in the class $\mathcal{P}$. Before continuing, we first describe the reasons and appropriateness for making these assumptions.

Regularity property $(\bf{A}.1)$ guarantees the pointwise convergence for various Fourier series discussed in this paper. In particular, it plays an important role in the proof in Lemma \ref{simp_one}. However, this condition could be weakened using the concept of Leray-Hopf weak solutions as defined in \cite{CRR10}. 

Since the periods $\Pi_1$ and $\Pi_2$ could be taken to be arbitrarily large, assuming the periodicity in $x_1$ and $x_2$ is a reasonable approximation for experimental channel flow simulations. A technical convenience of assuming $(\bf{A}.2)$ is the availability of the Fourier series expansion.

The boundedness assumption $(\bf{A}.3)$ comes from the definition of weak global attractor of the NSE as given in \cite{CRR10}, however, we remark that even though there are several equivalent ways to define the global attractors for many dissipative systems (see \cite{T97}, \cite{CP88}), in some particular systems without having full dissipations, the appropriate notion for attractors should be defined using the boundedness (see \cite{BFL}). 

Assumptions $(\bf{A}.4)$ and $(\bf{A}.5)$ are physically reasonable, since both the pressure $P$ and the pressure drops, i.e., $p_1(t)$, $p_2(t)$, can be easily controlled by the experimenters.

\section{A simple Reynolds averaging}
\raggedbottom
Before introducing the definition of the Reynolds averaging considered in the sequel, we note the following property for the velocity fields that belong to $\mathcal{P}$:
\begin{lemma}
\label{simp_one}
If $u(x,t)=(u_1(x,t),u_2(x,t),u_3(x,t)) \in \mathcal{P}$ is a solution of the NSE $(\ref{nse})$ (or the NS-$\alpha$ $(\ref{che})$), we have 
\begin{align*}
u_3(x,t)\equiv 0, \forall x\in \Omega, t\in \mathbb{R}.
\end{align*}
\end{lemma}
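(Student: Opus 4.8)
The plan is to exploit the incompressibility condition $\nabla\cdot u=0$ together with the no-slip boundary condition \eqref{no_slip_bc} and the periodicity $(\mathbf{A}.2)$ to show that $u_3$ satisfies a Poincaré-type inequality along the $x_3$-direction which, when combined with an energy estimate coming from assumptions $(\mathbf{A}.3)$--$(\mathbf{A}.5)$, forces $u_3\equiv 0$. First I would fix $x_1,x_2,t$ and observe that $u_3(x_1,x_2,0,t)=u_3(x_1,x_2,h,t)=0$ by the no-slip condition, so the one-dimensional Poincaré inequality of Lemma \ref{poincare}, applied to $\phi(x_3)=u_3(x_1,x_2,x_3,t)$, gives $\int_0^h (\partial_{x_3}u_3)^2\,dx_3 \ge h^{-2}\int_0^h u_3^2\,dx_3$ for each such $x_1,x_2,t$.

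Next I would try to produce an identity showing that $\partial_{x_3}u_3$ is itself small, or rather that the full Dirichlet-type integral of $u_3$ over a period cell is controlled by quantities that vanish. The natural route is to test the third component of the momentum equation \eqref{nse} (or \eqref{che}) against $u_3$ and integrate over the period box $[0,\Pi_1]\times[0,\Pi_2]\times[0,h]$. Periodicity in $x_1,x_2$ kills all boundary terms in those directions, the no-slip condition kills the boundary terms at $x_3=0,h$, and the pressure term $\int \partial_{x_3}P\, u_3$ integrates by parts to $-\int P\,\partial_{x_3}u_3$; one must then use $\nabla\cdot u=0$, i.e. $\partial_{x_3}u_3 = -\partial_{x_1}u_1-\partial_{x_2}u_2$, to rewrite this and see the pressure contribution in terms of the controlled pressure drops $p_1(t)$, $p_2(t)$ from Remark \ref{rk_pressure_dp} and Remark \ref{rk_p2}. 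The point of invoking $(\mathbf{A}.5)$ (via Remark \ref{rk_p2}, which gives $p_2\equiv 0$ and periodicity of $P$ in $x_2$) is precisely to eliminate the would-be unbounded pressure contribution in the $x_2$-direction; assumption $(\mathbf{A}.4)$ then bounds what remains. Combining this with the time-uniform energy bound $(\mathbf{A}.3)$ and the Poincaré inequality above should yield that the spatial integral of $|u_3|^2$ is bounded by something that does not grow, and a further argument (e.g. integrating in time and using invariance, or a differential-inequality/eternal-solution argument in the spirit of the weak attractor concept referenced in the introduction) forces $\int u_3^2\,dx = 0$.

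The main obstacle I expect is handling the pressure term cleanly: showing that, despite $P$ being defined only up to the additive quasi-periodic structure described in Remark \ref{rk_pressure_dp}, the relevant integral $\int_{[0,\Pi_1]\times[0,\Pi_2]\times[0,h]} P\,\partial_{x_3}u_3\,dx$ is genuinely well defined and bounded uniformly in $t$. This is where $(\mathbf{A}.4)$ and $(\mathbf{A}.5)$ (equivalently $(\mathbf{A}.5')$) do the real work, converting pointwise/period-drop information on $P$ into an $L^1$-in-space, $L^\infty$-in-time bound. A secondary subtlety is that the same computation must go through verbatim for the NS-$\alpha$ system \eqref{che}: there the extra term $\sum_j v_j\nabla u_j$ appears, and since $v=(1-\alpha^2\Delta)u$ one must check that testing against $u_3$ and integrating by parts still produces only controlled or sign-definite contributions; the regularity $(\mathbf{A}.1)$ is what licenses all these integrations by parts and the termwise manipulation of the Fourier expansions guaranteed by $(\mathbf{A}.2)$. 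Once the pressure term is tamed, the conclusion $u_3\equiv 0$ should follow from Lemma \ref{poincare} together with the boundedness in $(\mathbf{A}.3)$ by a standard argument.
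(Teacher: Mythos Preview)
Your approach has a genuine gap, and it also misses that the result is purely kinematic. The paper's proof uses only the incompressibility $\nabla\cdot u=0$, the periodicity $(\mathbf{A}.2)$, and the no-slip condition (\ref{no_slip_bc}); it never touches the momentum equation, the pressure, or assumptions $(\mathbf{A}.3)$--$(\mathbf{A}.5)$. Concretely, each component of $u$ vanishes at $x_3=0,h$, so all three admit expansions in the basis $e^{2\pi i(k_1x_1/\Pi_1+k_2x_2/\Pi_2)}\sin(\pi k_3 x_3/h)$. The divergence-free condition then forces $\partial_{x_3}u_3$ to equal a \emph{sine} series in $x_3$ (namely $-\partial_{x_1}u_1-\partial_{x_2}u_2$), while differentiating the expansion of $u_3$ term by term gives a \emph{cosine} series. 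Writing both in exponentials and matching coefficients for each $(k_1,k_2)$ yields $\hat u_3(k)\,k_3=0$ for every $k_3\in\mathbb{N}$, hence $\hat u_3\equiv 0$ and $u_3\equiv 0$. The regularity $(\mathbf{A}.1)$ is what justifies the termwise differentiation and the coefficient identification; nothing dynamical enters.

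Your energy route does not close. Testing the third momentum equation against $u_3$ gives
\[
\tfrac12\,\frac{d}{dt}\int u_3^2\,dx+\nu\int|\nabla u_3|^2\,dx=\int P\,\partial_{x_3}u_3\,dx,
\]
and after substituting $\partial_{x_3}u_3=-\partial_{x_1}u_1-\partial_{x_2}u_2$ and integrating by parts in $x_1,x_2$, the right-hand side contains integrals of $u_1,u_2$ against $\partial_{x_1}P,\partial_{x_2}P$ together with boundary contributions proportional to $p_1(t),p_2(t)$. Assumptions $(\mathbf{A}.3)$--$(\mathbf{A}.5)$ make these quantities \emph{bounded}, but they are not small in $\|u_3\|$ and do not vanish; the coupling to $u_1,u_2$ is genuine. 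At best you obtain an inequality of the form $\frac{d}{dt}\|u_3\|^2+c\|u_3\|^2\le C$ with $C>0$, and for an eternal bounded solution this yields only $\|u_3\|^2\le C/c$, never $\|u_3\|=0$. The ``main obstacle'' you flagged is therefore fatal to the strategy, not merely technical. Incidentally, Lemma~\ref{poincare} is not used here at all; in the paper it appears later, in the uniqueness argument of Appendix~A.
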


\begin{proof}
By the periodicity $(\bf{A}.2)$ and no-slip boundary condition (\ref{no_slip_bc}), we can express $u$ as follows,
\begin{align*}
u(x,t)=\sum_{k\in \mathbb{Z}^2 \times \mathbb{N}}\hat{u}(t;k) E(x;k),
\end{align*}
where $k=(k_1,k_2,k_3)$, $\mathbb{N}$ denotes the set of positive integers, and
\begin{align*}
E(x;k):=e^{2\pi i (\frac{k_1x_1}{\Pi_1}+\frac{k_2x_2}{\Pi_2})}\sin(\frac{\pi k_3x_3}{h}).
\end{align*}

The incompressibility condition in (\ref{nse}) can be written as
\begin{align*}
0=\nabla \cdot u &=\sum_{k\in \mathbb{Z}^2 \times \mathbb{N}}\hat{u}_1(t;k)\frac{\partial}{\partial x_1} E(x;k)
+\sum_{k\in \mathbb{Z}^2 \times \mathbb{N}}\hat{u}_2(t;k)\frac{\partial}{\partial x_2} E(x;k)
+\frac{\partial u_3}{\partial x_3},
\end{align*}
hence,
\begin{align}
\label{form_one}
\frac{\partial u_3}{\partial x_3}=-i 2\pi \sum_{k\in \mathbb{Z}^2 \times \mathbb{N}}\left(\hat{u}_1(k)\frac{k_1}{\Pi_1}+\hat{u}_2(k)\frac{k_2}{\Pi_2}\right)\sin(\frac{\pi k_3x_3}{h}) e^{2\pi i (\frac{k_1x_1}{\Pi_1}+\frac{k_2x_2}{\Pi_2})};
\end{align}
on the other hand,
\begin{align}
\label{form_two}
\frac{\partial u_3}{\partial x_3}=\sum_{k\in \mathbb{Z}^2 \times \mathbb{N}}\hat{u}_3(k) \frac{k_3 \pi}{h}\cos(\frac{\pi k_3x_3}{h})e^{2\pi i (\frac{k_1x_1}{\Pi_1}+\frac{k_2x_2}{\Pi_2})}.
\end{align}

From (\ref{form_one}) and (\ref{form_two}), we deduce that
\begin{align}
\label{form_three}
\sum_{k_3\in \mathbb{N}}\left(\hat{u}_3(k)\frac{k_3\pi}{h}\cos(\frac{\pi k_3x_3}{h})+2\pi i [\hat{u}_1(k)\frac{k_1}{\Pi_1}+\hat{u}_2(k)\frac{k_2}{\Pi_2}] \sin(\frac{\pi k_3x_3}{h})\right)=0,
\end{align}
for all $(k_1,k_2)\in \mathbb{Z}^2$ and for all $x_3\in [0,h]$.

Now, we can rewrite (\ref{form_three}) as
\begin{align*}
& \sum_{k_3\in \mathbb{N}}\left(\frac{k_3\pi}{2h}\hat{u}_3(k)+\pi [\hat{u}_1(k)\frac{k_1}{\Pi_1}+\hat{u}_2(k)\frac{k_2}{\Pi_2}] \right)e^{i\frac{\pi k_3x_3}{h}} \\
+&
\sum_{k_3 \in \mathbb{N}}\left(\frac{k_3\pi}{2h}\hat{u}_3(k)-\pi [\hat{u}_1(k)\frac{k_1}{\Pi_1}+\hat{u}_2(k)\frac{k_2}{\Pi_2}] \right)e^{-i\frac{\pi k_3x_3}{h}}=0,
\end{align*}
from which we obtain, for all $k_3 \in \mathbb{N}$, the followings
\begin{align*}
\frac{k_3\pi}{2h}\hat{u}_3(k)+\pi [\hat{u}_1(k)\frac{k_1}{\Pi_1}+\hat{u}_2(k)\frac{k_2}{\Pi_2}]=0,
\end{align*}
and
\begin{align*}
\frac{k_3\pi}{2h}\hat{u}_3(k)-\pi [\hat{u}_1(k)\frac{k_1}{\Pi_1}+\hat{u}_2(k)\frac{k_2}{\Pi_2}]=0,
\end{align*}
so,
\begin{align}
\label{reduce_k3}
\hat{u}_3(k)k_3\equiv 0,
\end{align}
and
\begin{align*}
\hat{u}_1(k)\frac{k_1}{\Pi_1}+\hat{u}_2(k)\frac{k_2}{\Pi_2}\equiv 0,
\end{align*}
from ($\ref{reduce_k3}$), we have,
\begin{align*}
\hat{u}_3(k)\equiv 0, \forall k \in \mathbb{Z}^2\times \mathbb{N},
\end{align*}
that is, $u_3(x,t)=0$.
\end{proof}

\begin{remark}
\label{using_distribution}
It can be shown that Lemma $\ref{simp_one}$ is still valid without the regularity assumption $(\bf{A}.1)$ using the theory of distribution. This result will be reported elsewhere. 
\end{remark}

From Lemma \ref{simp_one}, the NSE (\ref{nse}) becomes
\begin{align}
\label{nse_simple}
\left\{\begin{matrix}
\frac{\partial}{\partial t}u_1+u_1 \frac{\partial}{\partial x_1}u_1+u_2\frac{\partial}{\partial x_2}u_1-\nu (\frac{\partial^2}{\partial x_1^2}+\frac{\partial^2}{\partial x_2^2}+\frac{\partial^2}{\partial x_3^2})u_1=-\frac{\partial}{\partial x_1}P\\ 
\frac{\partial}{\partial t}u_2+u_1 \frac{\partial}{\partial x_1}u_2+u_2\frac{\partial}{\partial x_2}u_2-\nu (\frac{\partial^2}{\partial x_1^2}+\frac{\partial^2}{\partial x_2^2}+\frac{\partial^2}{\partial x_3^2})u_2=-\frac{\partial}{\partial x_2}P\\
0=-\frac{\partial}{\partial x_3}P\\
\frac{\partial}{\partial x_1}u_1+\frac{\partial}{\partial x_2}u_2=0.
\end{matrix}\right.
\end{align}

The Reynolds type averaging with which we will work throughout is given in the following definition:
\begin{definition}
\label{Reynolds type average}
For any given scalar/vector function $\phi=\phi(x)$,
\begin{align}
\label{avg_q}
<\phi>(x_3):=\frac{1}{\Pi_1 \Pi_2} \int_{0}^{\Pi_1} \int_{0}^{\Pi_2} \phi dx_2dx_1.
\end{align}
\end{definition}

Applying the operation $<\cdot>$ to the first and second equations in (\ref{nse_simple}) , one gets the following Reynolds type equations
\begin{align}
\label{avg_nse}
\frac{\partial}{\partial t}
\begin{pmatrix}
<u_1(t)>\\
<u_2(t)>
\end{pmatrix}-\nu \frac{\partial^2}{\partial x_3^2} \begin{pmatrix}
<u_1(t)>\\
<u_2(t)>
\end{pmatrix}=-\begin{pmatrix}
\frac{p_1(t)}{\Pi_1}\\
\frac{p_2(t)}{\Pi_2}
\end{pmatrix}.
\end{align}

Using (\ref{periodicity_p}), we can easily obtain,

\begin{proposition}
\label{zero_avg_2nd}
For all $u(x,t)\in \mathcal{P}$, we have
\begin{align}
\label{avg_zero}
<u_2(t)>(x_3)\equiv 0, \forall x_3 \in [0,h], t \in \mathbb{R}.
\end{align}

\end{proposition}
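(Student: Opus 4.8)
The plan is to read off from the averaged system (\ref{avg_nse}) a closed, homogeneous equation for $\langle u_2(t)\rangle$ and then argue that the only solution compatible with membership in $\mathcal{P}$ is identically zero. First, by (\ref{periodicity_p}) (i.e.\ Remark \ref{rk_p2}) we have $p_2(t)\equiv 0$, so the second scalar equation in (\ref{avg_nse}) becomes the one-dimensional heat equation
\[
\partial_t\langle u_2(t)\rangle(x_3)=\nu\,\partial_{x_3}^2\langle u_2(t)\rangle(x_3),\qquad x_3\in[0,h],\ t\in\mathbb{R}.
\]
Averaging the no-slip condition (\ref{no_slip_bc}) over $x_1,x_2$ according to Definition \ref{Reynolds type average} gives the homogeneous Dirichlet data $\langle u_2(t)\rangle(0)=\langle u_2(t)\rangle(h)=0$ for every $t$, and by $(\mathbf{A}.1)$ the function $x_3\mapsto\langle u_2(t)\rangle(x_3)$ is smooth, hence in particular $C^1$ on $[0,h]$.

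Next I would run the standard energy estimate. Put $g(t):=\int_0^h\langle u_2(t)\rangle(x_3)^2\,dx_3$. Multiplying the heat equation by $\langle u_2(t)\rangle$, integrating over $[0,h]$, integrating by parts and using the vanishing boundary values yields $g'(t)=-2\nu\int_0^h\big(\partial_{x_3}\langle u_2(t)\rangle\big)^2\,dx_3$. Applying Lemma \ref{poincare} with $\phi=\langle u_2(t)\rangle$ then gives the differential inequality $g'(t)\le-\tfrac{2\nu}{h^2}\,g(t)$, equivalently $\tfrac{d}{dt}\!\big(e^{2\nu t/h^2}g(t)\big)\le 0$; hence for every $t_0<t$,
\[
g(t)\le e^{-2\nu(t-t_0)/h^2}\,g(t_0).
\]

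The final step is to let $t_0\to-\infty$ for fixed $t$. By the Cauchy--Schwarz (Jensen) inequality for the probability average $\langle\cdot\rangle$ one has $\langle u_2\rangle^2\le\langle u_2^2\rangle$, so
\[
g(t_0)\le\frac{1}{\Pi_1\Pi_2}\int_0^{\Pi_1}\!\!\int_0^{\Pi_2}\!\!\int_0^h u(x,t_0)\cdot u(x,t_0)\,dx,
\]
which is bounded uniformly in $t_0\in\mathbb{R}$ by the energy bound (\ref{bdd}) of $(\mathbf{A}.3)$. Therefore the right-hand side of the displayed inequality tends to $0$ as $t_0\to-\infty$, forcing $g(t)=0$ and hence $\langle u_2(t)\rangle(x_3)\equiv 0$ on $[0,h]$ for all $t\in\mathbb{R}$.

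I expect the only delicate point to be this passage $t_0\to-\infty$: it is precisely where the uniform-in-time control of the energy from $(\mathbf{A}.3)$ (equivalently, membership in the bounded weak-attractor-type class $\mathcal{P}$) is indispensable, since an ancient solution of the heat equation on $[0,h]$ may otherwise grow as $t\to-\infty$. Everything else is the routine heat-equation/Poincar\'e computation, which is why, as the text says, the conclusion follows easily once (\ref{periodicity_p}) is in hand.
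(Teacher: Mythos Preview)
Your argument is correct and is precisely the approach the paper has in mind: the text omits an explicit proof of Proposition~\ref{zero_avg_2nd}, but the identical energy/Poincar\'e/$t_0\to-\infty$ computation is spelled out in Appendix~A for Theorem~\ref{uniqueness_avg}, with $(\mathbf{A}.3)$ invoked in exactly the way you describe. Your identification of the $t_0\to-\infty$ passage as the one nontrivial point is also on target.
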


Therefore, the averaged velocity field takes the form
\begin{align}
\label{averaged_velocity}
<u(t)>(x_3)=\begin{pmatrix}
<u_1(t)>(x_3)\\
<u_2(t)>(x_3)\\
<u_3(t)>(x_3)
\end{pmatrix}=\begin{pmatrix}
<u_1(t)>(x_3)\\
0\\
0
\end{pmatrix}
\end{align}

\begin{theorem}
\label{uniqueness_avg}
For given $p_1(t)$ and $p_2(t)$, the set 
\begin{align*}
\{<u>: u\in\mathcal{P}\}
\end{align*}
is a nonzero singleton. 
\end{theorem}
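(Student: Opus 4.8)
The plan is to show that the averaged profile $\langle u\rangle$ is uniquely determined by the boundary-value problem that the Reynolds equations (\ref{avg_nse}) impose on it. By Proposition \ref{zero_avg_2nd} and the form (\ref{averaged_velocity}), only the scalar $U(x_3,t):=\langle u_1(t)\rangle(x_3)$ is in play, and (\ref{avg_nse}) together with the no-slip condition (\ref{no_slip_bc}) (which passes to the average) gives
\begin{align}
\label{eq:avg-heat}
\partial_t U(x_3,t)-\nu\,\partial_{x_3}^2 U(x_3,t)=-\frac{p_1(t)}{\Pi_1},\qquad U(0,t)=U(h,t)=0 .
\end{align}
First I would observe that $(\bf{A}.3)$ forces $U(\cdot,t)\in L^2(0,h)$ uniformly in $t$, so $t\mapsto \|U(\cdot,t)\|_{L^2(0,h)}$ is a bounded function on all of $\mathbb{R}$. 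Then I would argue uniqueness: if $U$ and $\widetilde U$ both solve (\ref{eq:avg-heat}) with the same $p_1(t)$, their difference $W:=U-\widetilde U$ solves the homogeneous heat equation $\partial_t W=\nu\,\partial_{x_3}^2 W$ on $(0,h)$ with zero boundary data, and $\|W(\cdot,t)\|_{L^2}$ is bounded on $\mathbb{R}$. Multiplying by $W$ and integrating over $(0,h)$, then invoking the Poincar\'e inequality of Lemma \ref{poincare}, gives $\tfrac{d}{dt}\|W(\cdot,t)\|_{L^2}^2\le -\tfrac{2\nu}{h^2}\|W(\cdot,t)\|_{L^2}^2$; hence $\|W(\cdot,t)\|_{L^2}^2\le e^{-2\nu(t-t_0)/h^2}\|W(\cdot,t_0)\|_{L^2}^2$ for every $t\ge t_0$. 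Letting $t_0\to-\infty$ with the uniform bound on $\|W(\cdot,t_0)\|_{L^2}$ kills the right-hand side, so $W\equiv 0$. This is the backward-uniqueness / ``bounded eternal solution'' argument, and it is exactly where assumption $(\bf{A}.3)$ is used.

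Having shown the set is at most a singleton, I would then show it is nonempty and nonzero by exhibiting a concrete element. The natural choice is to note that the class $\mathcal{P}$ is nonempty because it contains a stationary laminar flow: take $u=(U(x_3),0,0)$ with $U$ the Poiseuille profile (\ref{nse_par_form}), $b$ chosen (with the correct sign) so that the associated pressure drop $p_1$ lies in the interval prescribed by $(\bf{A}.4)$, namely $-p_1=2\nu b/(h/2)^2\cdot\Pi_1>0$ up to constants; one checks directly that such a $u$ satisfies $(\bf{A}.1)$--$(\bf{A}.5)$. For this $u$ one has $\langle u\rangle=u\neq 0$, so the singleton is nonzero. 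More importantly, for \emph{any} prescribed admissible $p_1(t)$ one must produce some $u\in\mathcal{P}$ realizing it; here I would again use the Poiseuille-type ansatz, now time-dependent: solve the linear problem (\ref{eq:avg-heat}) for $U(x_3,t)$ (which is solvable and bounded because $|p_1(t)|\le\bar p$ by $(\bf{A}.4)$ — e.g. write $U$ via Duhamel against the Dirichlet heat semigroup on $(0,h)$, whose decay rate $\nu\pi^2/h^2$ makes the convolution over $(-\infty,t]$ converge and stay bounded), and then set $u(x,t):=(U(x_3,t),0,0)$. Because this $u$ depends only on $x_3$ and $t$ and is divergence-free with $u_2\equiv u_3\equiv 0$, the nonlinear terms $u_1\partial_{x_1}u_1+u_2\partial_{x_2}u_1$ vanish identically, so $u$ solves the reduced NSE (\ref{nse_simple}) with pressure $P=-\tfrac{p_1(t)}{\Pi_1}x_1+$const, and one verifies $(\bf{A}.1)$--$(\bf{A}.5)$ in turn. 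Then $\langle u\rangle=u$, so $\{\langle u\rangle:u\in\mathcal{P}\}$ contains exactly this element and, by the uniqueness step, nothing else.

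The main obstacle is the uniqueness half, specifically making the backward-in-time argument rigorous: one has a solution of a parabolic equation defined for all $t\in\mathbb{R}$ (an ``ancient'' or eternal solution) and must rule out the nontrivial homogeneous solutions, which grow like $e^{\nu\pi^2 t/h^2}$ as $t\to-\infty$ but are bounded forward in time. The boundedness hypothesis $(\bf{A}.3)$ is precisely the uniform-in-$t$ $L^2$ bound needed to exclude these; the delicate point is that $(\bf{A}.3)$ bounds the \emph{full} velocity $u$, and one needs the elementary fact that averaging does not increase the $L^2(0,h)$ norm in the relevant normalization (Jensen / Cauchy--Schwarz in the $x_1,x_2$ variables), so the bound descends to $U=\langle u_1\rangle$. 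A secondary, more routine obstacle is checking that the time-dependent Poiseuille construction genuinely satisfies all five defining properties of $\mathcal{P}$ — in particular $(\bf{A}.4)$ (sign and bound on $p_1$, and $p_2\equiv 0$) and $(\bf{A}.5)$ (the pressure $P=-\tfrac{p_1(t)}{\Pi_1}x_1+\text{const}$ has no $x_2$-dependence, so it is trivially bounded in $x_2$) — but these are direct verifications once the ansatz is written down.
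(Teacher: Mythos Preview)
Your uniqueness argument is exactly the paper's: take the difference $W$ of two averaged profiles, observe it solves the homogeneous Dirichlet heat problem on $(0,h)$, run the energy estimate with Poincar\'e (Lemma~\ref{poincare}), and send $t_0\to-\infty$ using the uniform $L^2$ bound supplied by $(\bf{A}.3)$.

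Where you diverge is on the ``nonzero'' (and nonempty) part. You argue constructively: build an explicit $u\in\mathcal{P}$ via the time-dependent Poiseuille ansatz $u=(U(x_3,t),0,0)$ with $U$ given by Duhamel against the Dirichlet heat semigroup, then note $\langle u\rangle=u\neq 0$. The paper instead dispatches ``nonzero'' with a one-line contradiction: if $\langle u\rangle\equiv 0$, then (\ref{avg_nse}) forces $p_1(t)\equiv 0$, contradicting the strict sign condition $-p_1(t)>0$ in $(\bf{A}.4)$. Your route is longer but yields more --- it exhibits the unique element explicitly (which the paper also does, but only later in Proposition~\ref{repr_by_kernel}) and directly verifies $\mathcal{P}\neq\varnothing$ for the prescribed $p_1$, a point the paper's proof silently assumes. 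You also take care to note that the $L^2$ bound in $(\bf{A}.3)$ descends from $u$ to $\langle u_1\rangle$ via Cauchy--Schwarz in $(x_1,x_2)$, a step the paper skips. Both arguments are correct; the paper's is the minimal one targeted at the stated claim.
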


\begin{proof}
Though the proof of this lemma is elementary, we still provide the details in Appendix A.
\end{proof}

By (\ref{avg_nse}), $<u_1(t)>(x_3)$ satisfies
\begin{align}
\label{first_avg_component}
\frac{\partial}{\partial t}<u_1(t)>(x_3)-\nu \frac{\partial^2}{\partial x_3^2}<u_1(t)>(x_3)=-p_1(t)/\Pi_1,
\end{align}
with boundary conditions
\begin{align}
\label{bc_first_component}
<u_1(t)>(x_3)|_{x_3=0,h}=0.
\end{align}
In order to get an explicit form of $<u_1(t)>(x_3)$, we apply the Duhamel principal in a form adapted for $(\ref{first_avg_component})$ and $(\ref{bc_first_component})$. We can obtain the following integral representation for $<u_1(t)>(x_3)$. See Appendix B for the proof.
 
\begin{proposition}
\label{repr_by_kernel}
The following relation holds for $u(x,t)\in \mathcal{P}$
\begin{align}
\label{kernel_repr}
<u_1(t)>(x_3)=\int_{-\infty}^{t} K(x_3,t-\tau)p_1(\tau)d\tau,
\end{align}
where, the kernel function $K(x,t)$ is defined by the series,
\begin{align}
\label{kernel_K}
K(x,t)=\sum_{k=1}^{\infty}\frac{2\left((-1)^k-1\right)}{\Pi_1 k \pi}e^{-\nu(\frac{\pi k}{h})^2t}\sin\frac{\pi k x}{h}.
\end{align}
\end{proposition}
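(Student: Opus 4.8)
The plan is to solve the linear parabolic problem \eqref{first_avg_component}--\eqref{bc_first_component} by separation of variables plus Duhamel, and then read off the kernel. First I would expand the unknown $\phi(x_3,t):=\langle u_1(t)\rangle(x_3)$ in the spatial eigenbasis $\{\sin(\pi k x_3/h)\}_{k\ge 1}$ of the operator $-\partial_{x_3}^2$ on $[0,h]$ with homogeneous Dirichlet conditions, writing $\phi(x_3,t)=\sum_{k\ge1}c_k(t)\sin(\pi k x_3/h)$. Likewise I would expand the (spatially constant) forcing: the Fourier sine coefficients of the constant function $1$ on $[0,h]$ are $\frac{2}{h}\int_0^h\sin(\pi k x_3/h)\,dx_3=\frac{2(1-(-1)^k)}{\pi k}$, so the right-hand side $-p_1(t)/\Pi_1$ has sine coefficients $-\frac{p_1(t)}{\Pi_1}\cdot\frac{2(1-(-1)^k)}{\pi k}$. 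Substituting into \eqref{first_avg_component} decouples the PDE into the scalar ODEs
\begin{align*}
\dot c_k(t)+\nu\Bigl(\tfrac{\pi k}{h}\Bigr)^2 c_k(t)=-\frac{p_1(t)}{\Pi_1}\cdot\frac{2\bigl(1-(-1)^k\bigr)}{\pi k},\qquad k\ge1.
\end{align*}

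Next I would solve each ODE on the whole line by the variation-of-constants (Duhamel) formula. The only subtlety is the choice of "initial" time: since the solution is required to exist for all $t\in\mathbb{R}$ (assumption $(\bf A.3)$) and to have bounded energy (so the $c_k(t)$ stay bounded), the homogeneous solution $c_k^{\mathrm{hom}}(t)=c_k(t_0)e^{-\nu(\pi k/h)^2(t-t_0)}$ must be discarded; sending $t_0\to-\infty$ and using $|p_1|\le\bar p$ from $(\bf A.4)$ to justify convergence of the resulting integral, one gets the unique bounded solution
\begin{align*}
c_k(t)=-\int_{-\infty}^{t}e^{-\nu(\pi k/h)^2(t-\tau)}\,\frac{p_1(\tau)}{\Pi_1}\cdot\frac{2\bigl(1-(-1)^k\bigr)}{\pi k}\,d\tau .
\end{align*}
Reassembling $\phi(x_3,t)=\sum_k c_k(t)\sin(\pi k x_3/h)$ and interchanging sum and integral yields exactly \eqref{kernel_repr} with the kernel \eqref{kernel_K}, after noting $-(1-(-1)^k)=(-1)^k-1$. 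Uniqueness of this $\langle u_1\rangle$ among elements of $\mathcal P$ is already guaranteed by Theorem \ref{uniqueness_avg}, so there is no ambiguity in the object being represented.

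The main obstacle is the rigorous justification of the interchanges of infinite summation with the time integral (and, implicitly, with the spatial differentiation used to derive the ODEs), since the series \eqref{kernel_K} converges only conditionally at $t=0$. This is handled by exploiting the exponential factor $e^{-\nu(\pi k/h)^2(t-\tau)}$: for $t-\tau$ bounded away from $0$ the series converges absolutely and uniformly, and for small $t-\tau$ one uses the smoothness and periodicity $(\bf A.1)$--$(\bf A.2)$, which give rapid decay of the genuine Fourier coefficients $c_k(t)$ of the smooth function $\phi(\cdot,t)$, so that Fubini applies on $(-\infty,t]$. A secondary point to check is that the discarding of the homogeneous part is forced: this is where $(\bf A.3)$ and $(\bf A.4)$ enter, ensuring both that the improper integral converges and that no bounded homogeneous solution can be added. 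Since the paper defers these details to Appendix B, in the body I would simply indicate that the representation follows from separation of variables and Duhamel's principle adapted to the boundary conditions, with the selection of the backward-in-time integral dictated by boundedness.
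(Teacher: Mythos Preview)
Your proposal is correct and follows essentially the same route as the paper's Appendix~B proof: expand $\langle u_1(t)\rangle(x_3)$ in the sine eigenbasis of $-\partial_{x_3}^2$ on $[0,h]$ with Dirichlet conditions, project the constant forcing to obtain decoupled scalar ODEs, solve each by variation of constants from an initial time $t_0$, and send $t_0\to-\infty$ using $(\mathbf{A}.3)$ to kill the homogeneous part. The paper does not spell out the Fubini/interchange justifications or the role of $(\mathbf{A}.4)$ that you discuss, so your write-up is in fact a bit more careful than the original.
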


\begin{lemma}
\label{kernel_prop}
The kernel function $K(x,t)$ defined by $(\ref{kernel_K})$ satisfies the following properties:
\begin{enumerate} 

\item \begin{align}
\label{int_kernel}
\int_{-\infty}^t K(x,t-\tau)d\tau=\frac{-1}{2\Pi_1 \nu}x(h-x)
\end{align}
\item \begin{align}
\label{kernel_eqn}
\frac{\partial}{\partial t}K(x,t)-\nu \frac{\partial^2}{\partial x^2}K(x,t)=0.
\end{align}
\end{enumerate}
\end{lemma}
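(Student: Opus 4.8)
The plan is to treat the two identities separately, proving the heat equation \eqref{kernel_eqn} first, since both of my routes to \eqref{int_kernel} will rely on it or on the same Fourier machinery. For \eqref{kernel_eqn}, fix $\delta>0$ and restrict attention to $x\in[0,h]$, $t\ge\delta$. Each summand $e^{-\nu(\pi k/h)^2 t}\sin(\pi k x/h)$ in \eqref{kernel_K} is annihilated by $\partial_t-\nu\partial_x^2$, because both $\partial_t$ and $\nu\partial_x^2$ act on it as multiplication by $-\nu(\pi k/h)^2$. The coefficients $2((-1)^k-1)/(\Pi_1 k\pi)$ are bounded, while the factor $e^{-\nu(\pi k/h)^2\delta}$ decays faster than any power of $k$; hence the series obtained by applying $\partial_t$, $\partial_x$, or $\partial_x^2$ term by term converges absolutely and uniformly on $[0,h]\times[\delta,\infty)$. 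Term-by-term differentiation is therefore legitimate and gives $\partial_tK-\nu\partial_x^2K=0$ for $t\ge\delta$, hence for all $t>0$.

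For \eqref{int_kernel}, set $G(x):=\int_{-\infty}^t K(x,t-\tau)\,d\tau=\int_0^\infty K(x,s)\,ds$, which does not depend on $t$. Since $\int_0^\infty e^{-\nu(\pi k/h)^2 s}\,ds=h^2/(\nu\pi^2k^2)$, the $k$-th coefficient now decays like $k^{-3}$ and the interchange of $\sum$ with $\int_0^\infty$ is justified (e.g.\ by dominated convergence), so
\[
G(x)=\frac{2h^2}{\Pi_1\nu\pi^3}\sum_{k=1}^{\infty}\frac{(-1)^k-1}{k^3}\sin\frac{\pi k x}{h}
=-\frac{4h^2}{\Pi_1\nu\pi^3}\sum_{m=1}^{\infty}\frac{1}{(2m-1)^3}\sin\frac{(2m-1)\pi x}{h}.
\]
It remains to identify this series. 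Computing the Fourier sine coefficients $b_k=\frac{2}{h}\int_0^h x(h-x)\sin\frac{k\pi x}{h}\,dx$ of $x(h-x)$ on $[0,h]$ by two integrations by parts yields $b_k=\frac{4h^2}{\pi^3 k^3}(1-(-1)^k)$, i.e.\ $b_{2m}=0$ and $b_{2m-1}=\frac{8h^2}{\pi^3(2m-1)^3}$; equivalently $\sum_{m\ge1}(2m-1)^{-3}\sin\frac{(2m-1)\pi x}{h}=\frac{\pi^3}{8h^2}x(h-x)$. Substituting this back collapses $G(x)$ to $-\frac{1}{2\Pi_1\nu}x(h-x)$, as claimed.

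Two shortcuts to \eqref{int_kernel} are worth noting, in case one prefers not to quote the Fourier expansion of $x(h-x)$. First, differentiating under the integral and using \eqref{kernel_eqn} gives $\nu G''(x)=\int_0^\infty\partial_sK(x,s)\,ds=-K(x,0^+)$; the series for $K(x,0^+)$ is the classical square-wave series $-\frac{4}{\Pi_1\pi}\sum_{m\ge1}\frac1{2m-1}\sin\frac{(2m-1)\pi x}{h}$, equal to $-1/\Pi_1$ for $x\in(0,h)$, so $G''=1/(\nu\Pi_1)$ with $G(0)=G(h)=0$, which integrates to the stated formula. Second, by Proposition \ref{repr_by_kernel} the function $G(x_3)=\int_{-\infty}^t K(x_3,t-\tau)\cdot1\,d\tau$ is the averaged first velocity component associated with the constant pressure drop $p_1\equiv1$, hence the time-independent solution of \eqref{first_avg_component}--\eqref{bc_first_component} with $p_1\equiv1$, namely $-\frac{1}{2\Pi_1\nu}x_3(h-x_3)$.

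The only point needing care --- and the one I would write out most carefully --- is the interchange of summation with integration or differentiation. It is unproblematic for \eqref{kernel_eqn} and for the time integral in \eqref{int_kernel} because of the exponential and $k^{-3}$ decay, respectively; in the square-wave shortcut, however, the series for $K(x,0^+)$ converges only conditionally and not uniformly near $x=0,h$, so the identity $\nu G''=-1/\Pi_1$ is first obtained on the open interval $(0,h)$ and then extended to the closed interval by continuity of both sides of \eqref{int_kernel}.
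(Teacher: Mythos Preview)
Your main argument is correct and mirrors the paper's proof: for \eqref{int_kernel} you interchange $\sum$ and $\int$ to reduce to $\sum_k \frac{2((-1)^k-1)}{\Pi_1 k\pi}\cdot\frac{h^2}{\nu\pi^2k^2}\sin\frac{\pi kx}{h}$ and then identify this with $-\frac{1}{2\Pi_1\nu}x(h-x)$ via the sine expansion of $x(h-x)$, and for \eqref{kernel_eqn} you differentiate termwise---exactly what the paper does, only you supply the uniform-convergence justifications that the paper leaves implicit. Your two shortcuts (the ODE $\nu G''=-K(x,0^+)=1/\Pi_1$ on $(0,h)$, and the appeal to Proposition~\ref{repr_by_kernel} with $p_1\equiv1$) are sound alternatives not present in the paper; the second is particularly clean since it sidesteps any series manipulation.
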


\begin{proof}

\begin{align*}
\int_{-\infty}^{t}K(x,t-\tau)d\tau&=\int_{-\infty}^{t}\sum_{k=1}^{\infty}\frac{2((-1)^k-1)}{\Pi_1 k \pi}e^{-\nu(\frac{\pi k}{h})^2(t-\tau)}\sin(\frac{\pi k x}{h})d\tau\\
&=\sum_{k=1}^{\infty}\int_{-\infty}^{t}\frac{2((-1)^k-1)}{\Pi_1 k \pi}e^{-\nu(\frac{\pi k}{h})^2(t-\tau)}\sin(\frac{\pi k x}{h})d\tau\\
&=\sum_{k=1}^{\infty}\frac{2((-1)^k-1)}{\Pi_1 k \pi}\sin(\frac{\pi k x}{h})\int_{-\infty}^{t}e^{-\nu(\frac{\pi k}{h})^2(t-\tau)}d\tau \\
&=\sum_{k=1}^{\infty}\frac{2((-1)^k-1)}{\Pi_1 k \pi}\sin(\frac{\pi k x}{h})\frac{h^2}{\nu (\pi k)^2},
\end{align*}
thus, (\ref{int_kernel}) is obtained by comparing the above with the following expansion
\begin{align}
\label{special_expansion_1}
x(h-x)=\sum_{k=1}^{\infty}\frac{4h^2(1-(-1)^k)}{(\pi k)^3}\sin(\frac{\pi k x}{h}).
\end{align}

Furthermore,
(\ref{kernel_eqn}) follows from a direct computation using the series representation (\ref{kernel_K}) of $K(x,t)$.
\end{proof}

Using Proposition \ref{repr_by_kernel} and (\ref{int_kernel}) in Lemma \ref{kernel_prop}, we recover the classic Poiseuille flow in the particular case when $p_1(t)$ is a constant.

\begin{theorem}
\label{poiseuille_flow_sol}
If, moreover, $p_1(t)=p_{10}<0$ for some constant $p_{10}\in \mathbb{R}$, then
\begin{align}
\label{poiseuille_form}
<u_1(t)>(x_3)=\mu x_3(h-x_3),
\end{align}
where the coefficient $\mu$ is given by
\begin{align}
\label{coef_mu}
\mu=\frac{-p_{10}}{2\Pi_1 \nu}
\end{align}
\end{theorem}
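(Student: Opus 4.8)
The statement follows almost immediately by combining the integral representation of Proposition~\ref{repr_by_kernel} with property~(\ref{int_kernel}) of Lemma~\ref{kernel_prop}. The plan is: start from
\begin{align*}
<u_1(t)>(x_3)=\int_{-\infty}^{t} K(x_3,t-\tau)p_1(\tau)\,d\tau,
\end{align*}
substitute the hypothesis $p_1(\tau)\equiv p_{10}$, and pull the constant $p_{10}$ outside the integral to obtain $<u_1(t)>(x_3)=p_{10}\int_{-\infty}^{t}K(x_3,t-\tau)\,d\tau$. Then invoke (\ref{int_kernel}), which evaluates $\int_{-\infty}^{t}K(x,t-\tau)\,d\tau=\frac{-1}{2\Pi_1\nu}x(h-x)$, giving
\begin{align*}
<u_1(t)>(x_3)=p_{10}\cdot\frac{-1}{2\Pi_1\nu}x_3(h-x_3)=\frac{-p_{10}}{2\Pi_1\nu}\,x_3(h-x_3)=\mu\,x_3(h-x_3),
\end{align*}
which is exactly (\ref{poiseuille_form}) with $\mu$ as in (\ref{coef_mu}).

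The only point that needs a word of care is the interchange of the constant factor with the improper $\tau$-integral, but this is harmless: the series defining $K$ converges absolutely and uniformly on $x\in[0,h]$, $t-\tau\ge\varepsilon>0$ (the exponential factors $e^{-\nu(\pi k/h)^2(t-\tau)}$ dominate), exactly as used in the proof of Lemma~\ref{kernel_prop}, so $\int_{-\infty}^t K(x_3,t-\tau)\,d\tau$ is a genuine (finite) number and scaling by the constant $p_{10}$ is legitimate. No new estimate is required beyond what is already established.

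I do not anticipate any real obstacle here; the theorem is essentially a corollary, and the substantive analytic content has been front-loaded into Proposition~\ref{repr_by_kernel} (the Duhamel representation) and Lemma~\ref{kernel_prop} (the closed-form evaluation of the time integral of the kernel via the Fourier expansion (\ref{special_expansion_1}) of $x(h-x)$). If desired, one can append a sanity check: the resulting profile $\mu\,x_3(h-x_3)$ matches the stationary NSE solution (\ref{nse_par_form}) after recentering, with $b=\mu h^2/4=-p_{10}h^2/(8\Pi_1\nu)$, confirming that the averaging recovers the classical Poiseuille flow when the pressure drop is time-independent.
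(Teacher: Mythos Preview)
Your proof is correct and matches the paper's approach exactly: the paper states the theorem as an immediate consequence of Proposition~\ref{repr_by_kernel} and identity~(\ref{int_kernel}) of Lemma~\ref{kernel_prop}, and you have simply spelled out that one-line substitution. No changes are needed.
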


\begin{remark}
\label{const_p1_P}
From Theorem $\ref{uniqueness_avg}$ and Theorem $\ref{poiseuille_flow_sol}$, we see that
\begin{align}
\label{const_P_p1}
\{<u>: u \in \mathcal{P}, p_1(t)=p_{10}\} =\{(\frac{-p_{10}}{2\Pi_1 \nu}x_3(h-x_3),0,0)\}
\end{align}
\end{remark}

From (\ref{avg_nse}), one can easily deduce the following simple mathematical connection between the NSE (\ref{nse}) and the NS-$\alpha$ (\ref{che}).

\begin{theorem}
\label{connection_2_eqn}
Let $u(x,t)\in \mathcal{P}$. Define, for any $\alpha\in\mathbb{R}$, $\alpha>0$, 
\begin{align}
\label{particular_velocity}
V(x,t)=(1-\alpha^2\frac{\partial^2}{\partial x_3^2})<u_1(t)>(x_3),
\end{align}
and,
\begin{align}
\label{particular_q}
Q(x,t)=-\frac{1}{2}\left(<u_1(t)>^2(x_3)-\alpha^2 \big(\frac{\partial}{\partial x_3} <u_1(t)>(x_3)\big)^2\right)+\frac{x_1 p_1(t)}{\Pi_1},
\end{align}
then, 
\begin{equation}
\label{nse_to_che}
\left \{\begin{matrix}
\frac{\partial}{\partial t}V(x,t)-\nu \frac{\partial^2}{\partial x_3^2}V(x,t)=-\frac{\partial}{\partial x_1}Q \\
0=-\frac{\partial}{\partial x_2} Q\\
V\frac{\partial}{\partial x_3}<u_1(t)>(x_3)=-\frac{\partial}{\partial x_3} Q
\end{matrix}\right.
\end{equation}
Equivalently, $(<u_1(t)>(x_3),0,0)$ is a solution of the NS-$\alpha$ $(\ref{che})$ with corresponding $Q(x,t)$ defined by $(\ref{particular_q})$.
\end{theorem}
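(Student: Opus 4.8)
The plan is to recognize that the system \eqref{nse_to_che} is precisely the component-wise form of the NS-$\alpha$ equations \eqref{che} when one looks for a velocity field of the special form $(U(x_3),0,0)$, and then to verify the three resulting scalar identities by direct differentiation, the only substantive input being equation \eqref{first_avg_component}.

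First I would establish the ``equivalently'' clause. Write $u^{\sharp}:=(<u_1(t)>(x_3),0,0)$ and $v^{\sharp}:=(1-\alpha^2\Delta)u^{\sharp}$. Since $<u_1(t)>(x_3)$ does not depend on $x_1,x_2$, we have $\nabla\cdot u^{\sharp}=\partial_{x_1}<u_1(t)>(x_3)=0$, so the incompressibility constraint in \eqref{che} holds automatically; moreover $\Delta$ reduces to $\partial_{x_3}^2$ on $u^{\sharp}_1$, so $v^{\sharp}=(V,0,0)$ with $V$ as in \eqref{particular_velocity}. The two nonlinear terms of \eqref{che} then collapse: $(u^{\sharp}\cdot\nabla)v^{\sharp}=<u_1(t)>\,\partial_{x_1}v^{\sharp}=0$, because $v^{\sharp}$ is independent of $x_1$; and $\sum_{j=1}^{3}v^{\sharp}_j\nabla u^{\sharp}_j=V\,\nabla<u_1(t)>=(0,0,V\,\partial_{x_3}<u_1(t)>)$, because $<u_1(t)>$ depends on $x_3$ only. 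Since also $\nu\Delta v^{\sharp}=(\nu\,\partial_{x_3}^2V,0,0)$ and $\partial_t v^{\sharp}=(\partial_t V,0,0)$, reading off the three Cartesian components of \eqref{che} gives exactly the three lines of \eqref{nse_to_che}. As every step is reversible, \eqref{nse_to_che} holds if and only if $(<u_1(t)>(x_3),0,0)$ solves \eqref{che} with this $Q$.

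It remains to verify that $V$ and $Q$ from \eqref{particular_velocity}--\eqref{particular_q} do satisfy \eqref{nse_to_che}. The middle identity is trivial: $Q$ is independent of $x_2$. For the first identity I would apply the operator $(1-\alpha^2\partial_{x_3}^2)$ to \eqref{first_avg_component}; it commutes with $\partial_t$ and with $\partial_{x_3}^2$, so the left-hand side becomes $\partial_t V-\nu\,\partial_{x_3}^2 V$, while the right-hand side stays $-p_1(t)/\Pi_1$ because $p_1(t)/\Pi_1$ is constant in $x_3$, and this equals $-\partial_{x_1}Q$ since $\partial_{x_1}Q=p_1(t)/\Pi_1$. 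For the last identity, differentiating \eqref{particular_q} in $x_3$ gives $\partial_{x_3}Q=-<u_1(t)>\,\partial_{x_3}<u_1(t)>+\alpha^2\big(\partial_{x_3}<u_1(t)>\big)\big(\partial_{x_3}^2<u_1(t)>\big)=-\big(<u_1(t)>-\alpha^2\partial_{x_3}^2<u_1(t)>\big)\,\partial_{x_3}<u_1(t)>=-V\,\partial_{x_3}<u_1(t)>$, which is what \eqref{nse_to_che} demands. All differentiations are classical, $<u_1(t)>(x_3)$ being $C^{\infty}$ by $(\mathbf{A}.1)$.

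I do not expect a genuine obstacle here; the proof is essentially bookkeeping once one notices the collapse of the convective terms for an $x_3$-only profile. The single point deserving a comment is the linear term $x_1 p_1(t)/\Pi_1$ in $Q$: unlike $<u_1(t)>$, the modified pressure $Q$ is not periodic in $x_1$, but this is exactly the pressure-drop term already allowed for $P$ in \eqref{nse} (see Remark \ref{rk_pressure_dp}), and it is precisely what makes $-\partial_{x_1}Q$ reproduce the forcing in \eqref{first_avg_component}; the quadratic part of $Q$, a function of $x_3$ alone, contributes nothing to $\partial_{x_1}Q$ or $\partial_{x_2}Q$ and is designed so that $-\partial_{x_3}Q$ absorbs the Camassa--Holm stretching term $\sum_j v_j\nabla u_j$.
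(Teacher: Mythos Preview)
Your proposal is correct and complete. The paper itself omits the proof, merely saying ``from \eqref{avg_nse}, one can easily deduce the following simple mathematical connection'' and then stating the theorem, so your direct verification---reducing the NS-$\alpha$ system to its component form for an $x_3$-only profile, and checking the three scalar identities using \eqref{first_avg_component} and straightforward differentiation of $Q$---is exactly the computation the authors leave to the reader.
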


\section{Transition mechanism from NSE to NS-$\alpha$}
\label{section_6}
\subsection {Motivations}
The flows we considered in this paper are driven by the pressure drop, as we only consider a potential body force, thus, the pressure term will play essential roles in the study of these equations. Moreover, from (\ref{kernel_repr}), we see that, for any $T>0$,
\begin{align*}
U_1(x_3):&=\frac{1}{T}\int_{0}^{T}<u_1(t)>(x_3)dt\\
&=\frac{1}{T}\int_{0}^{T}\int_{-\infty}^{t}K(x_3,t-\sigma)p_1(\sigma)d\sigma dt\\
&\overset{\xi=t-\sigma}{=}\int_{0}^{\infty}K(x_3,\xi)P_1(\xi)d\xi \\
&=\sum_{k=1}^{\infty}\frac{2((-1)^k-1)}{\Pi_1 k \pi}\left(\int_{0}^{\infty}e^{-\nu(\frac{k\pi}{h})^2 \xi}P_1(\xi)d\xi \right)\sin\frac{k\pi x_3}{h},
\end{align*}
where
\begin{align*}
 P_1(\xi)=\frac{1}{T}\int_{0}^{T}p_1(t-\xi)dt.
\end{align*}

It follows from the Plancherel's theorem that 
\begin{align*}
||U_1||^2_{L^2([0,h])}&=\sum_{k=1}^{\infty} \frac{4((-1)^k-1)^2}{(\Pi_1 k \pi)^2}\left(\int_{0}^{\infty}e^{-\nu(\frac{k\pi}{h})^2\xi}P_1(\xi)d\xi \right)^2||\sin\frac{k \pi x_3}{h}||^2_{L^2([0,h])}\\
&=\sum_{k=1}^{\infty} \frac{2h((-1)^k-1)^2}{(\Pi_1 k \pi)^2}\left(\int_{0}^{\infty}e^{-\nu(\frac{k\pi}{h})^2 \xi}P_1(\xi)d\xi\right)^2\\
&\leq \left( \text{use the bound for } p_1(t) \text{ in } (\bf{A}.4)\right)\\
&\leq \bar{p}^2 \sum_{k=1}^{\infty}\frac{2h^5((-1)^k-1)^2}{\Pi_1^2 \nu^2 {k\pi}^6}\\
&\leq \bar{p}^2 \frac{2h^5}{\Pi_1^2 \nu^2 (\pi)^6} \sum_{k=1}^{\infty}\frac{4}{(2k-1)^2}.
\end{align*}

Using the identity
\begin{align}
\label{a_series_sum}
\sum_{k=1}^{\infty}\frac{1}{(2k-1)^2}=\frac{\pi^2}{8},
\end{align}
we obtain
\begin{proposition}
\label{sel}
\begin{align}
\label{relation_Re_p}
Re\leq\frac{\bar{p}h^{3}}{\Pi_1 \nu^2 \pi^2},
\end{align}
where,
\begin{align}
\label{Reynolds}
Re:=\frac{h^{1/2}||U_1||_{L^2([0,h])}}{\nu}
\end{align}
is the Reynolds number related to the flow.
\end{proposition}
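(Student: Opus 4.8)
The plan is to collect the estimates already assembled immediately above the statement and push them through the elementary series identity \eqref{a_series_sum}. The starting point is the Plancherel computation
\[
\|U_1\|_{L^2([0,h])}^2=\sum_{k=1}^{\infty}\frac{2h((-1)^k-1)^2}{(\Pi_1 k\pi)^2}\left(\int_0^{\infty}e^{-\nu(\frac{k\pi}{h})^2\xi}P_1(\xi)\,d\xi\right)^2 .
\]
First I would record the two inputs that make the right-hand side both summable and explicit. From $(\mathbf{A}.4)$ one has $|p_1(t)|\le\bar p$ for all $t$, hence $|P_1(\xi)|\le\bar p$ for every $\xi$ and every averaging window $T$, so that $\left|\int_0^{\infty}e^{-\nu(k\pi/h)^2\xi}P_1(\xi)\,d\xi\right|\le \bar p\,h^2/(\nu(k\pi)^2)$; and $((-1)^k-1)^2$ equals $4$ for odd $k$ and $0$ for even $k$. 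Substituting the first bound and re-indexing by $k\mapsto 2k-1$ via the second turns the sum into $\bar p^2\,\frac{2h^5}{\Pi_1^2\nu^2\pi^6}\sum_{k=1}^{\infty}\frac{4}{(2k-1)^2}$, which is precisely the bound displayed just before the statement.

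Next I would invoke \eqref{a_series_sum}, i.e. $\sum_{k\ge1}(2k-1)^{-2}=\pi^2/8$, which collapses the series:
\[
\|U_1\|_{L^2([0,h])}^2\le \bar p^2\,\frac{2h^5}{\Pi_1^2\nu^2\pi^6}\cdot\frac{4\pi^2}{8}=\frac{\bar p^2 h^5}{\Pi_1^2\nu^2\pi^4}.
\]
Taking square roots gives $\|U_1\|_{L^2([0,h])}\le \bar p\,h^{5/2}/(\Pi_1\nu\pi^2)$, and then multiplying by $h^{1/2}/\nu$ and recalling the definition \eqref{Reynolds} of $Re$ yields $Re\le \bar p\,h^{3}/(\Pi_1\nu^2\pi^2)$, which is exactly \eqref{relation_Re_p}.

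There is no serious obstacle left at this stage: the substantive work — the kernel representation \eqref{kernel_repr}, Lemma \ref{kernel_prop}, and the Fubini/Plancherel manipulations expressing $U_1$ through $K$ and $P_1$ — has already been carried out. The only points that merit a line of justification are the interchange of the time average, the $\xi$-integral, and the $k$-summation in the formula for $U_1$, which is legitimate because the uniform bound on $p_1$ combined with the exponential factor $e^{-\nu(k\pi/h)^2\xi}$ makes every series and integral in sight absolutely convergent (and where the smoothness hypothesis $(\mathbf{A}.1)$ guarantees pointwise convergence of the underlying Fourier expansions), together with the classical Euler-type identity \eqref{a_series_sum}, which I would simply take as standard.
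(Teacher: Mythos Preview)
Your argument is correct and mirrors the paper's own derivation, which carries out exactly these Plancherel and $(\mathbf{A}.4)$ steps in the lines immediately preceding the proposition and then invokes \eqref{a_series_sum}. One small slip worth fixing: after substituting the integral bound and restricting to odd $k$, the summand actually carries $(2k-1)^{6}$ in the denominator, not $(2k-1)^{2}$; the paper reaches the displayed expression via the further trivial inequality $(2k-1)^{-6}\le(2k-1)^{-2}$, so you should state that extra majorization explicitly rather than absorb it into the phrase ``turns the sum into.''
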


From (\ref{relation_Re_p}) in Proposition \ref{sel} we see that the magnitude of the pressure drop forms an upper estimate of that of the velocity in the channel, thus if the magnitude of flows velocity become significant, or equivalently, the Reynolds number $Re$ of the flow is large, then the solution of the velocity from the NSE will no longer satisfy this upper estimate. At this moment, the fluid will ``select" the NS-$\alpha$ model instead of the NSE. This motivates our considerations in this section.

The Prandtl's wall roughness idea suggests our conjecture for the transition from the NSE to the NS-$\alpha$. That is, the roughness of the wall in $x_3$ direction may ``introduce'' the operator $(1-\alpha^2 \Delta)$. 

To model the effect of the wall roughness onto the fluid flow, it is necessary to consider the general channel geometry $\mathcal{O}=\mathbb{R}\times\mathbb{R}\times[x_3^{(l)},x_3^{(u)}]$, since we have to consider the effects of both the upper and lower walls. However, we stress that, in order to keep the symmetry property of the fluid flow, it is reasonable to assume that the wall roughness also satisfies the appropriate symmetry property such that the center of the channel does not move after taking into account of the change of vertical distance of the channel due to the wall rugosities, and consequently, only the change of the height matters. Therefore, without loss of generality, it suffices to consider the particular case where $x_3^{(l)}=0$ and $x_3^{(u)}=h$.

\subsection {Another property of the kernel $K(x,t;h)$}
We consider the solution $<u_1(t)>(x_3)$, represented in (\ref{kernel_repr}) to be also a function of the wall height $h$, that is,
\begin{align}
\label{u_1_in_h}
<u_1(t)>(x_3;h)=\int_{-\infty}^{t} K(x_3,t-\tau;h)p_1(\tau)d\tau,
\end{align}
where
\begin{align}
\label{kernel_K_in_h}
K(x,t;h)=\sum_{k=1}^{\infty}\frac{2\left((-1)^k-1\right)}{\Pi_1 k \pi}e^{-\nu(\frac{\pi k}{h})^2t}\sin\frac{\pi k x}{h}.
\end{align}

\begin{lemma}
\label{kernel_prop_more}
The kernel $K(x,t;h)$ also satisfies,
\begin{align}
\label{more_on_K}
\frac{\partial K}{\partial h}=-\frac{x}{h}\frac{\partial K}{\partial x}-\frac{2t}{h}\frac{\partial K}{\partial t}.
\end{align}
\end{lemma}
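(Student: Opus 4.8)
The plan is to verify the identity \eqref{more_on_K} by differentiating the explicit series \eqref{kernel_K_in_h} term by term; since each term is smooth in $(x,t,h)$ for $t>0$, $h>0$, and the series (together with all the derived series) converges uniformly on compact subsets thanks to the exponential factor $e^{-\nu(\pi k/h)^2 t}$, term-by-term differentiation is legitimate, and it suffices to check the identity on each summand. So first I would fix $k$ and set
\[
K_k(x,t;h)=\frac{2\big((-1)^k-1\big)}{\Pi_1 k\pi}\,e^{-\nu(\frac{\pi k}{h})^2 t}\sin\frac{\pi k x}{h},
\]
and compute the three partial derivatives of $K_k$.

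Next I would carry out those computations. Writing $c_k=\frac{2((-1)^k-1)}{\Pi_1 k\pi}$, $E_k=e^{-\nu(\pi k/h)^2 t}$ and $S_k=\sin(\pi k x/h)$, $C_k=\cos(\pi k x/h)$, one gets $\partial_x K_k = c_k E_k \frac{\pi k}{h} C_k$, $\partial_t K_k = -c_k \nu(\pi k/h)^2 E_k S_k$, and $\partial_h K_k = c_k E_k\big(\frac{2\nu\pi^2 k^2}{h^3}t\, S_k - \frac{\pi k x}{h^2} C_k\big)$. Then I would form the right-hand side of \eqref{more_on_K} for the $k$-th term: $-\frac{x}{h}\partial_x K_k - \frac{2t}{h}\partial_t K_k = -\frac{x}{h} c_k E_k \frac{\pi k}{h} C_k + \frac{2t}{h} c_k \nu (\pi k/h)^2 E_k S_k = c_k E_k\big(-\frac{\pi k x}{h^2} C_k + \frac{2\nu \pi^2 k^2 t}{h^3} S_k\big)$, which is exactly $\partial_h K_k$. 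Summing over $k$ then gives \eqref{more_on_K}.

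The only genuine point requiring care — and the part I would state explicitly rather than wave away — is the justification for interchanging $\sum_k$ with each of $\partial_x$, $\partial_t$, $\partial_h$. For any fixed $t_0>0$ and any compact interval of $h$ bounded away from $0$, the factor $e^{-\nu(\pi k/h)^2 t}$ decays faster than any power of $k$ uniformly for $t\ge t_0$, so all of the series $\sum_k K_k$, $\sum_k \partial_x K_k$, $\sum_k \partial_t K_k$, $\sum_k \partial_h K_k$ (and the mixed second-order series needed to iterate the argument) converge absolutely and uniformly there; standard theorems on differentiating series term by term then apply. This is the same mechanism already used implicitly in the proof of Lemma \ref{kernel_prop} to obtain \eqref{kernel_eqn}, so no new analytic input is needed. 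I do not expect any real obstacle: the identity \eqref{more_on_K} is simply the infinitesimal form of the scaling relation $K(x,t;h) = \tilde K(x/h,\, t/h^2)$ (up to the $h$-independent prefactor), and differentiating that relation in $h$ via the chain rule gives \eqref{more_on_K} directly; the term-by-term computation above is just the concrete verification of this scaling observation.
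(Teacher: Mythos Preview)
Your proof is correct. The paper states Lemma~\ref{kernel_prop_more} without proof, so there is no argument to compare against; your term-by-term differentiation of the series \eqref{kernel_K_in_h}, together with the uniform-convergence justification via the Gaussian factor $e^{-\nu(\pi k/h)^2 t}$, is exactly the natural verification and is undoubtedly what the authors intended. Your closing remark that \eqref{more_on_K} is just the infinitesimal form of the scaling $K(x,t;h)=\tilde K(x/h,\,t/h^2)$ is the cleanest way to see why the identity must hold, and could in fact replace the explicit computation entirely: since $c_k$ is independent of $h$, each summand depends on $(x,t,h)$ only through $(x/h,\,t/h^2)$, and the chain rule gives \eqref{more_on_K} at once.
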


\subsection {Modification of $p_1(t)$ due to the roughness of the channel walls}

We introduce the effect of the wall roughness to update the pressure drop $p_1(t)$ by considering it to be a function of the roughness $\mathfrak{R}=\mathfrak{R}(x_1,x_2)$, which is only a function of the variables $x_1$ and $x_2$.

Motivated by (\ref{first_avg_component}), we replace $p_1(t)$ by the following expression,
\begin{align}
\label{new_p1_def}
p_1(t;h+\mathfrak{R}):=\Pi_1\left(-\frac{\partial}{\partial t}<u_1(t)>(x_3;h+\mathfrak{R})+\nu\frac{\partial^2}{\partial x_3^2}<u_1(t)>(x_3;h+\mathfrak{R})\right),
\end{align}
where, recall the Reynolds type average $<\cdot>=\frac{1}{\Pi_1\Pi_2}\int_{0}^{\Pi_1}\int_{0}^{\Pi_2}\cdot dx_2dx_1$, defined in (\ref{avg_q}).

Now, we use the first order linear approximation, that is, we approximate $p_1(t;h+\mathfrak{R})$ by
\begin{align}
\label{new_p1_linear}
\Pi_1\left((-\frac{\partial }{\partial t}+\nu \frac{\partial ^2}{\partial x_3^2})<u_1(t)>(x_3;h)+\mathfrak{R}(x_1,x_2)(-\frac{\partial }{\partial t}+\nu \frac{\partial ^2}{\partial x_3^2})\frac{\partial}{\partial h} <u_1(t)>(x_3;h)\right).
\end{align}
By the identity (\ref{more_on_K}) in Lemma \ref{kernel_prop_more}, (\ref{new_p1_linear}) equals
\begin{align*}
p_1(t)+\Pi_1\mathfrak{R}(x_1,x_2)(-\frac{\partial }{\partial t}+\nu \frac{\partial ^2}{\partial x_3^2})\int_{-\infty}^{t}\left((-\frac{x_3}{h})\frac{\partial K}{\partial x_3}(x_3,t-\tau;h)-\frac{2(t-\tau)}{h}\frac{\partial K}{\partial t}(x_3,t-\tau;h)\right)p_1(\tau)d\tau,
\end{align*}
which, after using (\ref{kernel_eqn}) in Lemma \ref{kernel_prop}, can be simplified to be
\begin{align}
\label{temp1_new_p1}
p_1(t)\left(1+\mathfrak{R}(x_1,x_2)\Pi_1\frac{x_3}{h}\frac{\partial K}{\partial x_3}(x_3,0;h)\right).
\end{align}

However, the expression in (\ref{temp1_new_p1}) depends on $x_3$, thus, we take average in $x_3$ in (\ref{temp1_new_p1}) to get the form of updated $p_1(t)$, namely,
\begin{align}
\label{new_form_p1}
p_1^{new}(t)&:=\frac{1}{h}\int_{0}^{h} p_1(t) \left(1+\Pi_1\mathfrak{R}(x_1,x_2)\frac{x_3}{h}\frac{\partial K}{\partial x_3}(x_3,0;h)\right)dx_3\\ \nonumber
&=p_1(t)\left(1+\frac{\mathfrak{R}}{h}\sum_{k=1}^{\infty} \frac{2\left((-1)^k-1\right)^2}{(k\pi)^2}\right)\\ \nonumber
&=( \text{use } (\ref{a_series_sum})) \\ \nonumber
&= p_1(t)\left(1+\frac{\mathfrak{R}}{h}\right). \nonumber
\end{align}

\subsection{Update of $<u_1(t)>(x_3;h)$}
Replacing $p(t)$ in the expression (\ref{u_1_in_h}) by the updated form (\ref{new_form_p1}), we get the following,
\begin{align}
\label{temp_u1}
&\int_{-\infty}^{t}K(x_3,t-\tau;h)p_1^{new}(\tau)d\tau\\\nonumber
&=\int_{-\infty}^{t}K(x_3,t-\tau;h)p_1(\tau)d\tau+\int_{-\infty}^{t}K(x_3,t-\tau;h)\frac{\mathfrak{R}}{h}p_1(\tau)d\tau \\ \nonumber
&=<u_1(t)>(x_3;h)+\frac{1}{h}\int_{-\infty}^{t}\sum_{k=1}^{\infty}\frac{2((-1)^k-1)}{\Pi_1 k\pi}e^{-\nu(\frac{k\pi}{h})^2(t-\tau)}\mathfrak{R}(x_1,x_2)\left(\sin{\frac{k\pi x_3}{h}}\right)p_1(\tau)d\tau,
\end{align}

In order to update $<u_1(t)>(x_3;h)$, we need a mathematical description for the roughness. Our mathematical definition of the wall roughness is an application of  Mandelbrot's paradigm(see \cite{MAN2}) that the roughness of a wall is produced by a sum of small decreasing rugosities of the walls which are assumed to be connected by adequate self-similarities.

For this purpose, we first introduce the following definitions:
for $j=1,2$, let $r_j:\mathbb{R} \rightarrow \mathbb{R}$ denote a function satisfying
\begin{align*}
r_j (x)=r_j (x+\pi_j), \forall x \in \mathbb{R}
\end{align*}
where
\begin{align*}
\pi_j=\frac{\Pi_j}{N_j}, \hspace{.2 in} r_j (x)=\left\{\begin{matrix}
r_j (0)\hspace{.2 in} |x|<\delta_j \\ 
0\hspace{.2 in}\delta_j \leq |x|<\frac{\pi_j}{2} 
\end{matrix}\right.
\end{align*}
for some $N_j \in \mathbb{N}, N_j>0$  $(j=1,2)$, and $\delta_1, \delta_2 $ are the length of the edges in the $x_1$ and $x_2$ directions, respectively, of the rectangular parallelepiped we will define later.

In fact, we note that the periods $\pi_j (j=1,2)$ are properties of the walls and it is plausible to consider that our previously defined periods $\Pi_j$ are connected to the $\pi_j 's, j=1,2$.

The progenitor of the roughness system is
\begin{align*}
rug_1= \{ x=[x_1,x_2,x_3]: 0 \leq x_3 \leq \frac{r_1 (x_1) r_2 (x_2)}{h} \}
\end{align*}
Note that $rug_1$ is a system of rectangular parallelepipeds with volume 
\begin{align*}
vol_1=\frac{4 \delta_1 \delta_2 r_1 (0) r_2 (0)}{h}.
\end{align*}

The descendant generations of the rugosities $rug_n (n=2,3, ...)$ are
\begin{align*}
rug_n= \{ x=[x_1,x_2,x_3]: 0 \leq x_3 \leq \frac{r_1 (n x_1) r_2 (n x_2)}{ n^2 h} \}
\end{align*}
with volume
\begin{align*}
vol_n=\frac{1}{n^4} vol_1.
\end{align*}

Since the roughness is the superposition of different rugosities that bear similarities, mathematically we have
\begin{align}
\label{rf_form}
\mathfrak{R}(x_1,x_2)=\sum_{n=1}^{\infty}  \mathfrak{R}_{(n,k)}(x_1,x_2),
\end{align}
where
\begin{align}
\label{rug_form}
\mathfrak{R}_{(n,k)}(x_1,x_2)=e(n) s(n,k) \frac{r_1 (n x_1) r_2 (n x_2)}{n^2 h},
\end{align}
where $e(n)$ represents the effect of the rugosity $rug_n$ onto the fluid and $s(n,k)$ represents how the rugosities ``pick" the wavenumber. When the fluid flows through the wall, it will have the strongest effect to the smallest rectangular parallelepiped. So the smaller the volume is, the stronger the fluid effects. It is reasonable to assume $e(n)=\frac{c_1}{vol_n}=\frac{c_1 n^4}{vol_1}$, where $c_1$ is a dimensional constant.

As mentioned in the introduction, assuming that the rugosity $rug_n$ will only affect the wave numbers having size comparable with its own size, the rugosity will ``see" the size of the wavenumber of the fluid field, and ``pick" its favorable wavenumber to interact.  Also from (\ref{u_1_in_h}) and (\ref{kernel_K_in_h}) we notice that $k$ needs to be odd. Similarly, we need $n$ to be odd. So we can assume $s(n,k)=\chi_{(\frac{h-\epsilon_{n} h}{n}, \frac{h-\epsilon_{n-1} h}{n-1}]} (\frac{h}{k}) \times \chi_{2 \mathbb{Z}+1}(n)$, where $\epsilon_{n}=h^{-1}{h_1 \sum_{l=1}^n \frac{1}{l^2}}$, $h_1$ is a fixed small constant, and $h$ is sufficiently small.

\begin{proposition}
\label{pick1}
For each fixed odd number $k$, $\sum_{n=1}^{\infty}  \mathfrak{R}_{(n,k)}(x_1,x_2)=\mathfrak{R}_{(k,k)}(x_1,x_2).$
\end{proposition}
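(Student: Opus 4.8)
The plan is to reduce the statement to a vanishing property of the scalar weights $s(n,k)$ and then verify it by a short interval computation. By $(\ref{rug_form})$ we have $\mathfrak{R}_{(n,k)}(x_1,x_2)=e(n)\,s(n,k)\,\dfrac{r_1(nx_1)r_2(nx_2)}{n^2h}$, and since $e(n)=c_1 n^4/vol_1\ne 0$ and the profile $r_1(nx_1)r_2(nx_2)$ is not identically zero (otherwise $vol_1=0$), the function $\mathfrak{R}_{(n,k)}$ vanishes identically precisely when the number $s(n,k)$ vanishes. Hence the asserted identity $\sum_{n=1}^{\infty}\mathfrak{R}_{(n,k)}=\mathfrak{R}_{(k,k)}$ is equivalent to: for each fixed odd $k$, $s(n,k)=0$ for every $n\ne k$. (The surviving term will moreover be nontrivial, because the same computation gives $s(k,k)=1$.)

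Next I would unwind the definition $s(n,k)=\chi_{I_n}\!\big(\tfrac hk\big)\,\chi_{2\mathbb Z+1}(n)$, where $I_n$ is the half-open interval with (open) left endpoint $\tfrac{h(1-\epsilon_n)}{n}$ and (closed) right endpoint $\tfrac{h(1-\epsilon_{n-1})}{n-1}$, read as $+\infty$ when $n=1$ since then $\epsilon_0=0$ and the denominator is $0$. The sequence $(\epsilon_n)_{n\ge 0}$ is strictly increasing with $\epsilon_0=0$, and the standing smallness conventions on $h$ and $h_1$ are exactly what is needed to secure the single inequality
\[
\epsilon_{k-1}\le\tfrac1k ,
\]
from which $\epsilon_n\le\epsilon_{k-1}\le\tfrac1k$ for all $0\le n\le k-1$. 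Given this, everything reduces to three elementary comparisons, each just a matter of clearing denominators: (i) $\tfrac hk\in I_k$, because the lower bound $\tfrac hk>\tfrac{h(1-\epsilon_k)}{k}$ says $\epsilon_k>0$ while the upper bound $\tfrac hk\le\tfrac{h(1-\epsilon_{k-1})}{k-1}$ says $\epsilon_{k-1}\le\tfrac1k$; (ii) if $n\ge k+1$ then $\tfrac hk$ lies strictly above the right endpoint of $I_n$, since $\tfrac hk>\tfrac{h(1-\epsilon_{n-1})}{n-1}$ reduces to $k\epsilon_{n-1}>k+1-n$ and the right side is $\le 0$; (iii) if $1\le n\le k-1$ then $\tfrac hk$ does not exceed the open left endpoint of $I_n$, since $\tfrac hk\le\tfrac{h(1-\epsilon_n)}{n}$ reduces to $\epsilon_n\le 1-\tfrac nk$, and indeed $\epsilon_n\le\tfrac1k=1-\tfrac{k-1}{k}\le 1-\tfrac nk$. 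Note that the half-open convention is precisely what makes the tie case $\epsilon_{k-1}=1/k$ come out right: then $\tfrac hk$ is the shared endpoint of $I_k$ and $I_{k-1}$, lying in the former but not the latter.

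Finally, combining (i)--(iii): $\tfrac hk\notin I_n$ for every $n\ne k$, so $\chi_{I_n}(\tfrac hk)=0$ and $s(n,k)=0$ for all $n\ne k$, while $s(k,k)=\chi_{I_k}(\tfrac hk)\,\chi_{2\mathbb Z+1}(k)=1$ since $k$ is odd and $\tfrac hk\in I_k$. Therefore the series in $(\ref{rf_form})$ collapses to its $n=k$ term, which is the claimed identity. The only genuinely delicate points are the bookkeeping at the boundary indices $n=k\pm1$ together with the $n=1$ convention for the right endpoint of $I_1$ (and its analogue when $k=1$), and the verification that the smallness assumptions actually deliver $\epsilon_{k-1}\le 1/k$; once that single inequality is in hand, the argument is entirely routine comparison of monotone sequences against half-open intervals, so I do not anticipate any serious obstacle beyond getting the hypotheses stated in the form that produces it.
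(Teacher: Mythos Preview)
Your proof is correct and a bit more direct than the paper's. The paper rewrites the membership condition $\tfrac{h}{k}\in I_n$ as $n\in\bigl(k(1-\epsilon_n),\,k(1-\epsilon_{n-1})+1\bigr]$, observes that this interval has length $1+\tfrac{k\epsilon_1}{n^2}\in(1,2)$ under the smallness hypothesis $h_1\ll h$, so it contains at most two consecutive integers, argues these can only be $k-1$ and $k$, and then invokes the parity factor $\chi_{2\mathbb{Z}+1}(n)$ to discard $n=k-1$ (since $k$ is odd forces $k-1$ even). You instead run a direct trichotomy $n<k$, $n=k$, $n>k$ and check in each case whether $\tfrac{h}{k}$ clears the relevant endpoint of $I_n$; parity never enters your argument except to confirm $s(k,k)=1$ at the end. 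Both routes ultimately rest on the same smallness requirement, which you isolate explicitly as the single inequality $\epsilon_{k-1}\le\tfrac{1}{k}$; in the paper this is buried inside the step that narrows the candidates to $\{k-1,k\}$. Your version is more explicit about the half-open endpoint bookkeeping and the $n=1$ convention, while the paper's parity shortcut has the virtue of dispatching the borderline case $n=k-1$ without a separate inequality check.
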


\begin{proof}
See Appendix C.
\end{proof}

Then the second term of the RHS of (\ref{temp_u1}) becomes
\begin{align*}
&\frac{1}{h}\int_{-\infty}^{t}\sum_{k=1}^{\infty}\frac{2((-1)^k-1)}{\Pi_1 k\pi}e^{-\nu(\frac{k\pi}{h})^2(t-\tau)}\mathfrak{R}(x_1,x_2)(\sin{\frac{k\pi x_3}{h}})p_1(\tau)d\tau \\
&=\frac{1}{h}\int_{-\infty}^{t}\sum_{k=1}^{\infty}\left(\frac{2((-1)^k-1)}{\Pi_1 k\pi}e^{-\nu(\frac{k\pi}{h})^2(t-\tau)}\sum_{n=1}^{\infty}\mathfrak{R}_{(n,k)}(x_1,x_2)\sin{\frac{k\pi x_3}{h}}\right)p_1(\tau)d\tau \\
&=\frac{1}{h}\int_{-\infty}^{t}\sum_{k=1}^{\infty}\left(\frac{2((-1)^k-1)}{\Pi_1 k\pi}e^{-\nu(\frac{k\pi}{h})^2(t-\tau)}\mathfrak{R}_{(k,k)}(x_1,x_2)\sin{\frac{k\pi x_3}{h}}\right)p_1(\tau)d\tau \\
&=\frac{1}{h}\int_{-\infty}^{t}\sum_{k=1}^{\infty}\frac{2((-1)^k-1)}{\Pi_1 k\pi}e^{-\nu(\frac{k\pi}{h})^2(t-\tau)} \frac{c_1 k^4}{vol_1} \frac{r_1 (k x_1) r_2(k x_2)}{k^2 h}\sin{\frac{k\pi x_3}{h}}p_1(\tau)d\tau \\
&=\frac{1}{h}\int_{-\infty}^{t}\sum_{k=1}^{\infty}\frac{2((-1)^k-1)}{\Pi_1 k\pi}e^{-\nu(\frac{k\pi}{h})^2(t-\tau)} \frac{c_1 k^2}{vol_1 h} r_1 (k x_1) r_2(k x_2)\sin{\frac{k\pi x_3}{h}}p_1(\tau)d\tau
\end{align*}
averaging with respect to $x_1$ and $x_2$, we have
\begin{align*}
\frac{1}{h}\int_{-\infty}^{t}\sum_{k=1}^{\infty}\frac{2((-1)^k-1)}{\Pi_1 k\pi}e^{-\nu(\frac{k\pi}{h})^2(t-\tau)}\frac{c_1 k^2}{vol_1 h}r_1 (0) r_2 (0)\sin{\frac{k\pi x_3}{h}}p_1(\tau)d\tau,
\end{align*}
thus, the updated averaged velocity after taking into account the wall roughness is,
\begin{align}
\label{new_u_1}
<u_1(t)>^{new}(x_3;h)&=<u_1(t)>(x_3;h)-\frac{c_1 r_1 (0) r_2 (0)}{\pi^2 vol_1} \Delta <u_1(t)>(x_3;h), 
\end{align}
whence the Laplacian operator arises naturally. We obtain the NS-$\alpha$ with $\alpha=\sqrt{\frac{c_1 r_1 (0) r_2 (0)}{\pi^2 vol_1}}=\sqrt{\frac{c_1 h}{4\pi^2 \delta_1\delta_2}}$:
\begin{align}
\label{new_u_2}
<u_1(t)>^{new}(x_3;h)&=(1-{\alpha}^2 \Delta)<u_1(t)>(x_3;h).
\end{align}


\section{Appendices}

\subsection{Appendix A: Proof of Theorem \ref{uniqueness_avg} }
\begin{proof}
Let $u$ and $v$ be any two elements in $\mathcal{P}$ which are the solutions of the NSE (\ref{nse}).
Denoting
\begin{align*}
w(x_3,t)=(w_1(x_3,t),w_2(x_3,t),w_3(x_3,t)):=<u(t)>(x_3)-<v(t)>(x_3),
\end{align*}
then
\begin{align*}
w_2(x_3,t)=w_3(x_3,t)=0,
\end{align*}
for all $x_3 \in [0,h]$, $t\in \mathbb{R}$.

Moreover, from (\ref{avg_nse}), 
\begin{align*}
\frac{\partial}{\partial t}w_1(x_3,t)-\nu \frac{\partial^2}{\partial x_3^2}w_1(x_3,t)=0,
\end{align*}
hence,
\begin{align*}
\frac{1}{2}\frac{d}{dt}\int_{0}^{h}w_1^2(x_3,t)dx_3+\nu \int_{0}^h|\frac{\partial w_1}{\partial x_3}(x_3,t)|^2dx_3=0.
\end{align*}
Notice that 
\begin{align*}
w_1(x_3,t)|_{x_3=0,h}=0,
\end{align*}
then Poincar$\acute{e}$ inequality (\ref{poincare ineq}) is applicable, so we obtain,
\begin{align*}
\frac{1}{2}\frac{d}{dt}\int_{0}^{h}w_1^2(x_3,t)dx_3+\frac{\nu}{h^2} \int_{0}^hw_1^2(x_3,t)dx_3\leq 0,
\end{align*}
therefore, for any $t_0<t$, 
\begin{align*}
0\leq \int_0^hw_1^2(x_3,t)dx_3 \leq e^{-\frac{2}{h^2}(t-t_0)}\int_0^h w_1^2(x_3,t_0)dx_3 ,
\end{align*}
the uniqueness follows from using assumption $(\bf{A}.3)$ and letting $t_0\rightarrow -\infty$ in the last inequality.

Finally, if this singleton is zero, then from (\ref{avg_nse}), $p_1(t)$ must be zero, which contradicts with the assumption $(\bf{A.}4)$ that $p_1(t)$ is never zero.
\end{proof}

\subsection{Appendix B: Proof of Proposition \ref{repr_by_kernel}}
\begin{proof}
It is well known that (see \cite{N65}) an orthonormal basis for 
\begin{align}
\label{domain_A}
\mathcal{D}(A):=\{\phi(x) \in C^2([0,h]): \phi(x)|_{x=0,h}=0\},
\end{align}
where $A:=-\frac{\partial^2}{\partial x^2}$, is 
\begin{align*}
\begin{Bmatrix} \sqrt{2/h}\sin(\pi k x/h) \end{Bmatrix}_{k=1}^{\infty}.
\end{align*}

Therefore, $<u_1(t)>(x_3)$ can be expanded as,
\begin{align*}
<u_1(t)>(x_3)=\sum_{k=1}^{\infty} <u_1(t)>^{\widehat{}}(k) \sqrt{\frac{2}{h}}\sin\frac{\pi k x_3}{h}.
\end{align*}
To obtain an explicit form for $<u_1(t)>^{\widehat{}}(k)$, the coefficients in the Fourier sine series expansion, we introduce the Fourier expansion  in equation (\ref{first_avg_component}) to get
\begin{align*}
\frac{\partial}{\partial t} <u_1(t)>^{\widehat{}}(k)+\nu (\frac{\pi k}{h})^2<u_1(t)>^{\widehat{}}(k)&=\int_0^h -\frac{p_1(t)}{\Pi_1}\sqrt{\frac{2}{h}}\sin \frac{\pi k x_3}{h} dx_3\\
&=-\sqrt{\frac{2}{h}}\frac{p_1(t)}{\Pi_1}\frac{h}{\pi k}\bigg(1-(-1)^k\bigg),
\end{align*}
whence, for $t_0<t$, 
\begin{align*}
<u_1(t)>^{\widehat{}}(k)=e^{-\nu (\frac{\pi k}{h})^2(t-t_0)}<u_1(t_0)>^{\widehat{}}(k)
+\sqrt{\frac{2}{h}}\frac{h}{\Pi_1\pi k}\bigg((-1)^k-1\bigg) \int_{t_0}^t e^{-\nu (\frac{\pi k}{h})^2(t-\tau)}p_1(\tau)d\tau.
\end{align*}
Letting $t_0 \rightarrow -\infty$ and using $(\bf{A}.3)$, we have
\begin{align*}
<u_1(t)>^{\widehat{}}(k)=\sqrt{\frac{2}{h}}\frac{h}{\Pi_1\pi k}\bigg((-1)^k-1\bigg) \int_{-\infty}^t e^{-\nu (\frac{\pi k}{h})^2(t-\tau)}p_1(\tau)d\tau,
\end{align*}
and the result follows.
\end{proof}

\subsection{Appendix C: a ``matching" argument}
\begin{proof}[Proof of Proposition \ref{pick1}]
For any odd number $k$, we have
\begin{align*}
\sum_{n=1}^{\infty}  \mathfrak{R}_{(n,k)}(x_1,x_2)=\sum_{n=1}^{\infty} e(n) s(n,k) \frac{r_1 (n x_1) r_2 (n x_2)}{n^2 h},
\end{align*}
where $s(n,k) \neq 0$ only if $n$ is odd and $\frac{h}{k} \in (\frac{h-\epsilon_{n} h}{n}, \frac{h-\epsilon_{n-1} h}{n-1}]$.

Now,
\begin{align*}
\frac{h}{k} \in (\frac{h-\epsilon_{n} h}{n}, \frac{h-\epsilon_{n-1} h}{n-1}] \Rightarrow n \in (k(1-\epsilon_n), k(1-\epsilon_{n-1})+1]
\end{align*}
Since $h_1<<h$, $\epsilon_1$ is very small, we have
\begin{align*}
1<[k(1-\epsilon_{n-1})+1]-[k(1-\epsilon_n) ]=1+\frac{k \epsilon_1}{n^2}<2,
\end{align*}
this implies that there are at most two consecutive integers locate in between the interval $(k(1-\epsilon_n), k(1-\epsilon_{n-1})+1]$. The only two possibilities are $k$ and $k-1$, but $n$ need to be odd, so we have $n=k$. 
\end{proof}

\section{Acknowledgement}
This work was supported in part by NSF grants number DMS-1109638 and DMS-1109784.

\begin{bibdiv}
\begin{biblist}

\bib{BFL}{article}{
title={On the attractor for the semi-Dissipative Boussinesq Equations},
author={Biswas, A.},
author={Foias, C.},
author={Larios, A.}
journal={Submitted}
}

\bib{CFHO98}{article}{
  title={The Camassa-Holm equations as a closure model for turbulent channel and pipe flow},
  author={Chen, S. Y.},
 author={ Foias, C.},
 author={ Holm, D. D.},
author={ Olson, E.},
author={Titi, E. S.},
 author={ Wynne, S.},
  journal={Physical Review Letters},
  volume={81},
  number={24},
  pages={5338-5341},
  year={1998},
  publisher={APS}
}

\bib{CFHO99}{article}{
title={A connection between the Camassa--Holm equations and turbulent flows in channels and pipes},
  author={Chen, S. Y.},
 author={ Foias, C.},
 author={ Holm, D. D.},
 author={ Olson, E.},
author={ Titi, E. S.},
 author={ Wynne, S.},
  journal={Physics of Fluids},
  volume={11},
  number={8},
  pages={2343--2353},
  year={1999},
  publisher={AIP Publishing}
}

\bib{CFHOTW99}{article}{
  title={The Camassa--Holm equations and turbulence},
 author={Chen, S. Y.},
 author={ Foias, C.},
 author={ Holm, D. D.},
 author={ Olson, E.},
author={ Titi, E. S.},
 author={ Wynne, S.},
  journal={Physica D: Nonlinear Phenomena},
  volume={133},
  number={1},
  pages={49--65},
  year={1999},
  publisher={Elsevier}
}

\bib{A02}{article}{
  title={Turbulent boundary layer equations},
  author={Cheskidov, A.},
  journal={Comptes Rendus Mathematique},
  volume={334},
  number={5},
  pages={423--427},
  year={2002},
  publisher={Elsevier}
}

\bib{A04}{article}{
  title={Boundary layer for the Navier-Stokes-alpha model of fluid turbulence},
  author={Cheskidov, A.},
  journal={Archive for rational mechanics and analysis},
  volume={172},
  number={3},
  pages={333--362},
  year={2004},
  publisher={Springer}
}

\bib{CDT02}{article}{
  title={The three dimensional viscous Camassa--Holm equations, and their relation to the Navier--Stokes equations and turbulence theory},
  author={Foias, C.},
 author={Holm, D. D.},
 author={Titi, E. S.},
  journal={Journal of Dynamics and Differential Equations},
  volume={14},
  number={1},
  pages={1--35},
  year={2002},
  publisher={Springer}
}

\bib{CP88}{book}{
title={Navier-stokes equations},
  author={Constantin, P.},
author={Foias, C.},
  year={1988},
  publisher={University of Chicago Press}
}

\bib{CR87}{article}{
title={The connection between the Navier-Stokes equations, dynamical systems, and turbulence theory},
  author={Foias, C.},
 author={Temam, R.},
  journal={in``Directions in Partial Differential Equations" (Madison, WI, 1985), Publ. Math. Res. Center Univ. Wisconsin, 54, Academic Press, Boston, MA},
  pages={55--73},
  year={1987}
}
\bib{CRR10}{article}{
  title={Topological properties of the weak global attractor of the three-dimensional Navier-Stokes equations},
  author={Foias, C.},
 author={ Rosa, R.},
 author={ Temam, R.},
  journal={Discrete Contin. Dyn. Syst},
  volume={27},
  number={4},
  pages={1611--1631},
  year={2010}
}


\bib{H99}{article}{
author={D. D. Holm},
title={Fluctuation effects on 3D Lagrangian mean and Eulerian mean fluid motion},
journal={Physica D},
volume={133},
year={1999},
pages={215--269}
}


\bib{M83}{book}{
  title={The fractal geometry of nature},
  author={Mandelbrot, B. B.},
  volume={173},
  year={1983},
  publisher={Macmillan}
}

\bib{MAN2}{book}{
  title={Multifractals and $1/f$ noise},
  author={Mandelbrot, B. B.},
  volume={selected volume},
  year={1999},
  publisher={Springer}
}

\bib{N65}{book}{
  title={Introduction to real functions and orthogonal expansions},
  author={B. S. Nagy},
  year={1965},
  publisher={Akad{\'e}miai Kiad{\'o}}
}

\bib{T97}{book}{
title={Infinite dimensonal dynamical systems in mechanics and physics},
author={Temam, R.},
year={1997},
publisher={Springer Science  \& Business Media},
volume={68}
}

\end{biblist}
\end{bibdiv}
\end{document}